\newtheorem{experiment}[theorem]{Experiment}
\newtheorem{remark}[theorem]{Remark}
\newtheorem{summary}[theorem]{Summary}
\newtheorem{mtheorem}[theorem]{Meta-Theorem}
\newcommand{\R}{\mathbb R}
\newcommand{\C}{\mathbb C}
\newcommand{\eps}{\varepsilon}
\newcommand{\ph}{\phantom}
\newcommand{\vecu}{u}
\newcommand{\vecx}{x}
\newcommand{\mtxa}[2]{
\left[ \begin{array}{#1} #2 \end{array} \right]}
\newcommand{\smtxa}[2]{
{\mbox{\scriptsize
$\left[\!\! \begin{array}{#1} #2 \end{array} \!\! \right]$}}}
\newtheorem{conjecture}[theorem]{Conjecture}
\font\tenex=cmex10 
\newdimen\p@renwd
\def\bmat#1{\begingroup \m@th
 \setbox\z@\vbox{\def\cr{\crcr\noalign{\kern2\p@\global\let\cr\endline}}%
\ialign{$##$\hfil\kern2\p@\kern\p@renwd&\thinspace\hfil$##$\hfil
 &&\quad\hfil$##$\hfil\crcr
 \omit\strut\hfil\crcr\noalign{\kern-\baselineskip}%
 #1\crcr\omit\strut\cr}}%
 \setbox\tw@\vbox{\unvcopy\z@\global\setbox\@ne\lastbox}%
 \setbox\tw@\hbox{\unhbox\@ne\unskip\global\setbox\@ne\lastbox}%
 \setbox\tw@\hbox{$\kern\wd\@ne\kern-\p@renwd\left[\kern-\wd\@ne
 \global\setbox\@ne\vbox{\box\@ne\kern2\p@}%
 \vcenter{\kern-\ht\@ne\unvbox\z@\kern-\baselineskip}\,\right]$}%
 \null\;\vbox{\kern\ht\@ne\box\tw@}\endgroup}
\newif\ifMDlatex
\def\MD@us#1{\csname#1style\endcsname}
\def\MD@uf#1{\csname#1font\endcsname}
\def\MD@t{text}
\def\MD@s{script}
\def\MD@ss{scriptscript}
\newdimen\MD@unit
\def\MD@changestyle#1{
  \relax\MD@unit0.1\fontdimen6\MD@uf{#1}0
  \everymath\expandafter{\the\everymath\MD@us{#1}}
}
\def\MD@dot{$\m@th\ldotp$}
\def\MD@palette#1{\mathchoice{#1\MD@t}{#1\MD@t}{#1\MD@s}{#1\MD@ss}}
\def\MD@ddots#1{{\MD@changestyle{#1}%
  \mkern1mu\raise7\MD@unit\vbox{\kern7\MD@unit\hbox{\MD@dot}}%
  \mkern2mu\raise4\MD@unit\hbox{\MD@dot}%
  \mkern2mu\raise \MD@unit\hbox{\MD@dot}\mkern1mu}}%
\def\MD@iddots#1{{\MD@changestyle{#1}%
  \mkern1mu\raise \MD@unit\hbox{\MD@dot}%
  \mkern2mu\raise4\MD@unit\hbox{\MD@dot}%
  \mkern2mu\raise7\MD@unit\vbox{\kern7\MD@unit\hbox{\MD@dot}}}}%
\def\MD@vdots#1{\vbox{\MD@changestyle{#1}%
    \baselineskip4\MD@unit\lineskiplimit\z@
    \kern6\MD@unit\hbox{\MD@dot}\hbox{\MD@dot}\hbox{\MD@dot}}}%
  \DeclareRobustCommand\ddots{\mathinner{\MD@palette\MD@ddots}}%
  \DeclareRobustCommand\iddots{\mathinner{\MD@palette\MD@iddots}}%
  \DeclareRobustCommand\vdots{\mathinner{\MD@palette\MD@vdots}}%
  \def\ddots{\mathinner{\MD@palette\MD@ddots}}%
  \def\iddots{\mathinner{\MD@palette\MD@iddots}}%
  \def\vdots{\mathinner{\MD@palette\MD@vdots}}%
\newcommand{\adots}{\iddots}
\author{Michiel~E.~Hochstenbach\thanks{%
Version \today.
Department of Mathematics and Computer Science,
TU Eindhoven,
PO Box 513, 5600 MB, The Netherlands,
{\tt www.win.tue.nl/$\sim$hochsten}.}
\and
Christian Mehl\thanks{%
Institut f\"{u}r Mathematik, Technische Universit\"at Berlin, Sekretariat MA 4-5,
Stra\ss e des 17.~Juni 136, 10623 Berlin, Germany,
{\tt mehl@math.tu-berlin.de}.
This author has been supported by a Dutch 4TU AMI visitor's grant.}
\and
Bor~Plestenjak\thanks{%
IMFM and Faculty of Mathematics and Physics, University of Ljubljana,
Jadranska 19, SI-1000 Ljubljana, Slovenia,
{\tt bor.plestenjak@fmf.uni-lj.si}.
This author
has been supported by the Slovenian Research Agency (grant N1-0154).}}
\title{Solving singular generalized eigenvalue problems \\
Part III: structure preservation}
\begin{document}
\maketitle

\begin{abstract}
In Parts I and II of this series of papers, three new methods for the computation of eigenvalues of singular pencils were developed:
rank-completing perturbations, rank-projections, and augmentation.
It was observed that a straightforward structure-preserving adaption for symmetric pencils was not possible and it was left as an open question how to address this challenge.
In this Part III, it is shown how the observed issue can be circumvented by using
Hermitian perturbations. This leads to structure-preserving analogues of the three
techniques from Parts I and II for Hermitian pencils (including real symmetric
pencils) as well as for related structures. It is an important feature of these
methods that the sign characteristic of the given pencil is preserved.
As an application, it is shown that the resulting methods can be used to solve systems of bivariate polynomials.
\end{abstract}

\begin{keywords}
Singular symmetric pencil, singular Hermitian pencil, sign characteristic, generalized eigenvalue problem, structure-preserving perturbation, projection, augmentation, symmetric determinantal representations, bivariate polynomial systems.
\end{keywords}

\begin{AMS}
65F15, 15A18, 15A22, 
15A21, 47A55, 
65F22 
\end{AMS}

\pagestyle{myheadings}
\thispagestyle{plain}
\markboth{HOCHSTENBACH, MEHL, AND PLESTENJAK}{COMPUTING EIGENVALUES OF STRUCTURED SINGULAR PENCILS}

\section{Introduction}
\label{sec1}
We consider the real symmetric or complex Hermitian singular $n \times n$ generalized eigenvalue problem
\begin{equation} \label{gep}
Ax = \lambda \, Bx.
\end{equation}
This means that $A^* = A$, $B^* = B$ are real or complex $n\times n$ matrices and
we have $\det(A-\lambda B) \equiv 0$.
In particular, we note that both $A$ and $B$ may be indefinite, that is, may have positive and negative eigenvalues (in addition to having zero eigenvalues since singularity of
the pencil necessarily implies that both $A$ and $B$ are singular as well).

Hermitian pencils occur in many applications, such as in the linear quadratic
optimal control problem, i.e., the problem of solving the system
\[
E\dot{x}=Ax+Bu,\quad x(t_0)=x_0
\]
while simultaneously minimizing the cost function
\[
\int_{t_0}^\infty \big(x(t)^*\,Q\,x(t)+u(t)^*\,R\,u(t) + u(t)^*\,S^*\,x(t) + x(t)^*\,S\,u(t)\big)\,{\rm d}t.
\]
Here, we have $A,E,Q\in\mathbb C^{n,n}$ with $Q^*=Q$; $B,S\in\mathbb C^{n,m}$ and $R^*=R\in\mathbb C^{m,m}$. Furthermore, $x:[t_0,\infty)\,\to\mathbb C^n$ is the state of the system and
$u:[t_0,\infty)\,\to\mathbb C^m$ is the input, $x_0\in\mathbb C^n$ and $t_0\in\mathbb R$; see \cite{LanR95,Meh91}. It is well known that the solution can
be obtained via the solution of the generalized eigenvalue problem
$\mathcal A-\lambda\mathcal B$, where
\[
\mathcal A=\left[\begin{array}{ccc}Q&A^*&S\\ A&0&B\\ S^*&B^*&R\end{array}\right]
\quad\mbox{and}\quad
\mathcal B=\left[\begin{array}{ccc}0&-E^*&0\\ E&0&0\\ 0&0&0\end{array}\right];
\]
see \cite{Meh91} and the references therein.
Then the pencil $\mathcal A-\lambda \, (i\mathcal B)$
is a Hermitian pencil which may be singular, in particular, in the case
of a descriptor system, i.e., when $E$ is singular.

Another application are quadratic matrix polynomials of the form
$\lambda^2 M+\lambda D+K$ arising in vibration analysis \cite{GohLR82,TisM01}, with $M,D,K$ real symmetric or complex Hermitian. As proposed in \cite{HigMMT06b},
these polynomials can be linearized in a structure-preserving manner
by either one of the Hermitian pencils
\[
\left[\begin{array}{cc}D&K\\ K&0\end{array}\right]-
\lambda\left[\begin{array}{cc}M&0\\ 0&-K\end{array}\right]
\quad\mbox{or}\quad\left[\begin{array}{cc}-M&0\\ 0&K\end{array}\right]-
\lambda\left[\begin{array}{cc}0&M\\ M&D\end{array}\right].
\]
If both $M$ and $K$ are singular, then both linearizations are singular Hermitian pencils.
Nevertheless, as shown in \cite{DopN23}, the nonzero finite eigenvalues of
the quadratic matrix polynomial can be computed by solving the singular Hermitian
eigenvalue problem associated with either one of the two linearizations.

Furthermore, many applications in data analysis and machine learning give rise to eigenvalue problem of the form
\begin{equation} \label{dimred}
XFX^T \, z = \lambda \, XGX^T \, z,
\end{equation}
where $F$ and $G$ are symmetric positive semidefinite, and the columns of the $n \times m$ matrix $X$ typically represent data points.
Applications include various dimension reduction methods, such as FDA, LDA, LDE, LPP, ONPP, and OLPP; see, e.g., \cite{shi2020general}.
In the small-sample size case, where the number of sample points is less than the dimension, this pencil is singular.
Yet another (new) source of symmetric singular pencils is described in Section~\ref{sec:appl}.

It is well known that in many situations the $QZ$ algorithm can be used to
compute the eigenvalues of a singular matrix pencil. In addition to values
that are close to the original eigenvalues, the algorithm will also produce additional
values that result from perturbations by rounding errors made to the singular part of
the pencil; see \cite{Wil79} (and also \cite{DTFM_08_1stOrderPencil} for a detailed
first order perturbation analysis of singular pencils). Then the question arises how
to distinguish the ``true'' eigenvalues from the ``random eigenvalues'' among the
values produced by the $QZ$ algorithm. To circumvent this problem, Van Dooren suggested to
first extract the regular part of the pencil \cite{VD79} and this idea was implemented
in Guptri \cite{demmel1993generalized, Guptri} which probably has been the standard
method for solving singular generalized eigenvalue problems for more than four decades.
However Guptri is based on a staircase algorithm that needs rank decisions and may
therefore fail in situations when these decisions become critical.

Recently, alternative methods for the solution of singular eigenvalue problems
have been developed. In \cite{LotzNoferini} it is suggested to directly apply
the QZ algorithm to a singular pencil and separate the true from the fake eigenvalues
with the help of so-called weak condition numbers. In \cite{DanielIvana} a similar
approach is used that involves a small norm perturbation to the pencil before the
$QZ$ algorithm is applied.

The work in this paper follows a different approach and is a sequel to
Part I \cite{HMP19} and Part II \cite{HMP_SingGep2}, which deal with three
methods for nonsymmetric singular pencils: rank-completing perturbations,
rank-projections, and augmentation. All three methods are based on the following
theoretical background developed in \cite{HMP19}:
let the normal rank of the pencil be $r = \text{nrank}(A,B) = \max_{\zeta \in \C} \text{rank}(A+\zeta B)$, define $k = n-r$ and consider a perturbation of the form
\[
\widetilde A-\lambda \widetilde B=(A + \tau \,U D_A V^*)-\lambda \,(B + \tau \,U D_B V^*)
\]
where $\tau\in\mathbb C$, $U,V\in\mathbb C^{n,k}$ and $D_A-\lambda D_B$ is a regular pencil of size $k\times k$. Such perturbations are called \emph{rank-completing},
because the rank of the perturbation pencil $UD_AV^*-\lambda \, UD_BV^*$ is just large
enough to make the pencil $\widetilde A-\lambda\widetilde B$ regular. Generically,
this will be the case and, in addition, the spectrum of
$\widetilde A-\lambda\widetilde B$
consists of three types of eigenvalues: \emph{true eigenvalues} which are
exactly the (finite and infinite) eigenvalues of the original pencil $A-\lambda B$,
\emph{prescribed eigenvalues} which are precisely the eigenvalues of $D_A-\lambda D_B$, and
\emph{random eigenvalues} resulting from perturbation of the singular blocks of
the original pencil $A-\lambda B$. These three types of eigenvalues can be distinguished
with the help of orthogonality relations satisfied by the corresponding eigenvectors.
Computing eigentriplets $(\lambda, x, y)$ of the pencil $\widetilde A-\lambda\widetilde B$ (right and left eigenvectors $x$ and $y$ are well defined due to the regularity of
the pencil), we find that we have $V^*x=U^*y=0$ for true eigenvalues and
$V^*x, U^*y\neq 0$ for prescribed eigenvalues, while for random eigenvalues exactly one
of the quantities $V^*x$ and $U^*y$ is zero; see, e.g.,
\cite[Summary 4.7 and Table~5.1]{HMP19}.

For real symmetric or complex Hermitian pencils, it is beneficial to consider
structure-preserving perturbations to preserve the so-called \emph{sign characteristic} of real eigenvalues. Concerning real symmetric pencils and a
symmetric perturbation of the form $(A + \tau \,U D_A U^T)-\lambda \, (B + \tau \,U D_B U^T)$ with $U \in \R^{n,k}$, the following challenge was observed in Part I \cite[Rem.~4.8]{HMP19}:
\begin{quote}
``If $A$ and $B$ are symmetric, then it seems that for our current approach
we have to use nonsymmetric rank completing perturbations. Namely, when a
symmetric perturbation [\dots] is used, there is an issue with
the third group [\dots] as random eigenvalues appear either as double real eigenvalues or in complex conjugate pairs, and in the former case the orthogonality constraints cannot be satisfied. We leave the study of structured singular pencils for future research.''
\end{quote}

Indeed, the problem is that for symmetric pencils and a symmetric update, it is no longer possible that exactly one of $U^Tx$ and $U^Ty$ is zero, since left eigenvectors for real eigenvalues of symmetric pencils
are also right eigenvectors for the same eigenvalue.
Fortunately, as we show in this paper, it is possible to consider
{\em Hermitian rank-completing perturbations}
$(A + \tau \,U D_A U^*)-\lambda \, ( B + \tau \,U D_B U^*)$, where $\tau\in\mathbb R$, $U \in \C^{n,k}$ and where $D_A-\lambda D_B$ is a regular Hermitian $k\times k$ pencil.

The results on generic rank-completing perturbations from \cite{HMP19}
cannot be applied in this paper, because here we consider structure-preserving
perturbations with the property $V=U$. As a consequence, the meaning of genericity
differs from the one used in \cite{HMP19} and hence we have to develop new theoretical results.

The rest of the paper is organized as follows.
In Section~\ref{sec:prelim} we recall some necessary facts.
Section~\ref{sec:rankcomplete} proposes a rank-completing Hermitian perturbation as a structure-preserving version of the method of Part I \cite{HMP19}.
Section~\ref{sec:proj} discusses an alternative technique, a projection method, as a structure-preserving generalization of one of the two approaches of Part II \cite{HMP_SingGep2}. In Section 5 we discuss for which pencils with other
symmetry structures the results of this paper can be applied as well.
We illustrate the theoretical results with numerical experiments in Section~\ref{sec:appl},
where we also describe an application to bivariate polynomial systems. Finally,
we summarize some conclusions in Section~\ref{sec:concl}.

\section{Preliminaries}
\label{sec:prelim}
First, we define the notion of \emph{genericity} that is used in this paper.
A subset of $\mathbb R^{m}$ is an \emph{algebraic set} if it is the
set of common zeros of finitely many real polynomials in $m$ variables. A set
$\Omega\subseteq\mathbb R^m$ is called \emph{generic} if its complement is contained
in an algebraic set which is not the full space $\mathbb R^m$. (The latter condition
is satisfied if none of the underlying polynomials in $m$ variables is the zero
polynomial.) We call a set $\Omega\subseteq\mathbb C^{n,k}$ \emph{generic} if it
can be identified with a generic subset of $\mathbb R^{2nk}$ in the standard way.
Thus, genericity of a set of complex matrices is interpreted as genericity with
respect to the real and imaginary parts of the entries. This point of view is
important when complex conjugation is involved, because
complex conjugation is not a polynomial map over $\mathbb C$, but it is a polynomial
map over $\mathbb R$ when $\mathbb C$ is canonically identified with $\mathbb R^2$
via real and imaginary parts of complex numbers. Clearly, the set of all matrices
of full rank is a generic subset of $\mathbb C^{n,k}$. Since the intersection of
finitely many generic sets is again generic, we may always assume that a generic
set $\Omega\subseteq\mathbb C^{n,k}$ contains only matrices of full rank.

Next, we recall the well-known Kronecker canonical form (KCF) for matrix pencils; see, e.g., \cite{Gan59}.

\begin{theorem}[Kronecker canonical form]\label{thm:kcf}
Let $A,B\in\mathbb C^{n,m}$. Then there exist nonsingular matrices
$P\in\mathbb C^{n,n}$ and $Q\in\mathbb C^{m,m}$ such that
\[
P\,(A-\lambda B)\,Q=\left[\begin{array}{cc} R(\lambda)&0\\ 0 & S(\lambda)\end{array}\right],
\]
where $R(\lambda)=\operatorname{diag}(J-\lambda I_r,I_s-\lambda N)$ and where $J$ and
$N$ are in Jordan canonical form with $N$ nilpotent and $r,s\geq 0$. Furthermore,
\[
S(\lambda)=\operatorname{diag}\big(L_{m_1}(\lambda),\dots,L_{m_k}(\lambda), \,
L_{n_1}(\lambda)^\top,\dots,L_{n_\ell}(\lambda)^\top\big),
\]
where $L_{\eta}(\lambda)=[0 \ \, I_{\eta}]-\lambda \, [I_{\eta} \ \, 0]$
is of size $\eta\times (\eta+1)$, and $m_i,n_j\ge 0$ for $i=1,\dots,k$ and
$j=1,\dots,\ell$, and where $k,\ell\geq 0$. The canonical form is uniquely
determined up to permutation of blocks in the matrices $J$ and $N$ and in the pencil
$S(\lambda)$.
\end{theorem}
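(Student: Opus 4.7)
The plan is to prove existence by induction on the total size $n+m$, successively extracting canonical blocks under strict equivalence $(A,B)\mapsto(PAQ,PBQ)$, and to establish uniqueness by identifying each block type with a strict-equivalence invariant.

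For existence, I would first dispose of trivial reductions: any common right null vector of $A$ and $B$ can be moved to a standard basis vector via an invertible $Q$, splitting off an $L_0(\lambda)$ block (a zero column); dually a common left null vector splits off an $L_0(\lambda)^\top$ block. After finitely many such steps I may assume the pencil has no common null vectors on either side. If the remaining pencil is regular, i.e., $\det(A-\lambda B)\not\equiv 0$ (in particular $n=m$), choose $\lambda_0\in\C$ with $A-\lambda_0 B$ invertible, set $C=(A-\lambda_0 B)^{-1}B$, and apply the Jordan normal form to $C$: the blocks of $C$ for nonzero eigenvalues contribute, after a suitable M\"obius-type adjustment, the Jordan pairs $J-\lambda I_r$, while the nilpotent blocks of $C$ contribute the pairs $I_s-\lambda N$ associated with $\lambda=\infty$. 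This produces the regular block $R(\lambda)$ of the KCF.

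If the pencil is singular, there exists a nonzero polynomial vector $x(\lambda)=x_0+\lambda x_1+\cdots+\lambda^\epsilon x_\epsilon$ with $(A-\lambda B)\,x(\lambda)=0$, and I would choose one with $\epsilon\ge 1$ as small as possible (the case $\epsilon=0$ was excluded by the trivial reductions). Expanding in powers of $\lambda$ yields the chain relations $Ax_0=0$, $Ax_j=Bx_{j-1}$ for $1\le j\le\epsilon$, and $Bx_\epsilon=0$. The key lemma, proved by contradiction from the minimality of $\epsilon$, asserts that $x_0,\ldots,x_\epsilon\in\C^m$ and $Bx_0,\ldots,Bx_{\epsilon-1}\in\C^n$ are each linearly independent: any nontrivial relation would yield a polynomial null vector of smaller degree. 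Arranging these as the first columns of invertible matrices $Q$ and $P^{-1}$ respectively, the chain relations force $P(A-\lambda B)Q$ into the block-upper-triangular shape $\operatorname{diag}(L_\epsilon(\lambda),\ A'-\lambda B')$ plus an off-diagonal coupling block; that coupling block is then removed by a further strict equivalence. By induction the reduced pencil $A'-\lambda B'$ has a KCF, providing all remaining $L_{m_i}(\lambda)$ blocks together with the regular part, and a fully analogous argument applied to $(A-\lambda B)^\top$ extracts the $L_{n_j}(\lambda)^\top$ blocks. Uniqueness follows by noting that $r$, $s$, the partitions describing $J$ and $N$, and the multisets $(m_i)$ and $(n_j)$ are all strict-equivalence invariants: the finite elementary divisors come from the Smith form of $A-\lambda B$ over $\C[\lambda]$, the infinite elementary divisors from the Smith form of $\mu A-B$ at $\mu=0$, and the right and left minimal indices are the degree sequences of minimal bases of the right and left $\C[\lambda]$-null spaces of $A-\lambda B$.

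The main obstacle is the clearing of the off-diagonal coupling after peeling off one $L_\epsilon(\lambda)$: this reduces to a Sylvester-type equation that is solvable only because the minimal indices of the reduced pencil $A'-\lambda B'$ are bounded below by $\epsilon$, and establishing this lower bound requires the same minimality argument that underlies the linear independence of the chain vectors $x_0,\ldots,x_\epsilon$ and $Bx_0,\ldots,Bx_{\epsilon-1}$. Everything else is either bookkeeping, classical Jordan theory, or a transpose-dual repetition.
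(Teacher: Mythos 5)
The paper states this theorem as background and does not prove it; it cites Gantmacher \cite{Gan59}, so there is no paper proof against which to compare. Your sketch reproduces the classical Gantmacher-style existence-and-uniqueness argument and is correct at the level of detail given: you split off the $L_0$ and $L_0^\top$ blocks via common null vectors, handle the regular case through the Jordan form of $(A-\lambda_0 B)^{-1}B$ (finite and infinite elementary divisors), extract an $L_\epsilon(\lambda)$ block from a minimal-degree polynomial null vector using the linear independence of $x_0,\dots,x_\epsilon$ and of $Bx_0,\dots,Bx_{\epsilon-1}$, clear the resulting coupling block by a Sylvester-type equation whose solvability indeed rests on the minimality of $\epsilon$ (equivalently, that the reduced pencil has no polynomial null vector of lower degree), repeat on the transpose for the left minimal indices, and identify $r$, $s$, the Jordan structures, and the multisets of minimal indices as strict-equivalence invariants via the Smith forms of $A-\lambda B$ and $\mu A-B$ and via minimal polynomial bases of the left and right null spaces. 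This is precisely the approach in the cited reference, so the sketch is both correct and appropriate for the result.
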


The pencil $R(\lambda)$ is called the regular part of $A-\lambda B$ and contains
the \emph{eigenvalues} of the pencil. The \emph{finite eigenvalues} of $A-\lambda B$
are the eigenvalues of the matrix $J$ while the matrix $N$ represents the
\emph{eigenvalue} $\infty$. The numbers $m_1,\dots,m_k$ and $n_1,\dots,n_\ell$ are
called the \emph{right} and \emph{left minimal indices} of $S(\lambda)$. The values
$m_i=0$ and $n_j=0$ are possible, in this case the pencil $P\,(A-\lambda B)\,Q$ has
a zero column or row, respectively, in the corresponding place.

More important for our setting is the canonical form for Hermitian pencils that is
obtained under structure-preserving transformations, i.e., via congruence. It seems
that a complete and correct canonical form was first presented in \cite{Tho76},
and we therefore refer to it as the \emph{Thompson canonical form} (TCF). To be able
to state it, we need to introduce the following notation. For $\mu\in\mathbb R$ and
$n\in\mathbb N$, we define the real $n\times n$ pencils
\[
Z_{\mu,n}(\lambda):=\smtxa{cccc}{0&\dots&0&\mu\\ \vdots & \adots & \mu & 1\\
0&\adots&\adots & \\ \mu&1&&0}-\lambda
\left[\begin{array}{ccc} 0&&1\\  & \adots & \\ 1&&0\end{array}\right].
\]
and
\[
N_{n}(\lambda):=\left[\begin{array}{ccc} 0&&1\\  & \adots & \\ 1&&0\end{array}\right]-\lambda \ \smtxa{cccc}{0&\dots&\dots&0\\ \vdots &  & \adots & 1\\
\vdots&\adots&\adots & \\ 0&1&&0}.
\]

\begin{theorem}[Thompson canonical form]\label{thm:thompson}
Let $A,B\in \C^{n,n}$ be Hermitian. Then there exist a nonsingular matrix
$P\in \C^{n,n}$ such that
\begin{equation}\label{eq:tnf}
P\,(A-\lambda B)\,P^* =
\left[\begin{array}{cc} R(\lambda)&0\\ 0 & S(\lambda)\end{array}\right],
\quad\mbox{ with }\; R(\lambda)=\left[\begin{array}{ccc}R_{\rm real}(\lambda)&0&0\\
0&R_{\rm comp}(\lambda)&0\\ 0&0&R_{\rm inf}(\lambda)\end{array}\right]
\end{equation}
where
\[
R_{\rm real}(\lambda) = \operatorname{diag}\big(\sigma_1Z_{\mu_1,n_1}(\lambda),\dots,
\sigma_rZ_{\mu_r,n_r}(\lambda)\big),\quad
R_{\rm comp}(\lambda)=\left[\begin{array}{cc}0&J-\lambda I\\ J^*-\lambda I&0
\end{array}\right]
\]
and
\[
R_{\rm inf}(\lambda)=\operatorname{diag}(\sigma_{r+1}N_{n_{r+1}},
\dots,\sigma_{r+s}N_{n_{r+s}})
\]
with $\mu_1,\dots,\mu_r\in\mathbb R$ (not necessarily pairwise distinct),
$J$ a matrix in Jordan canonical form having only eigenvalues with positive
imaginary part, $n_1,\dots,n_{r+s}\in\mathbb N$, and
$\sigma_1,\dots,\sigma_{r+s}\in\{+1,-1\}$.
Furthermore,
\begin{equation} \label{eq:singpart}
S(\lambda)=\operatorname{diag}\left(\left[\begin{array}{cc}0&L_{m_1}(\lambda)\\ L_{m_1}(\lambda)^\top&0\end{array}\right],
\dots,\left[\begin{array}{cc}0&L_{m_k}(\lambda)\\ L_{m_k}(\lambda)^\top&0\end{array}\right]\right),
\end{equation}
where
$L_{j}(\lambda)=[0 \ \,
I_{\eta}]-\lambda \, [I_{\eta} \ \ 0]$
is of size $\eta\times (\eta+1)$, and $m_i\ge 0$ for $i=1,\dots,k$. The form is
unique up to permutation of blocks in $R(\lambda)$ and $S(\lambda)$.
\end{theorem}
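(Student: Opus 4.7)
The plan is to start from the Kronecker canonical form of Theorem~\ref{thm:kcf} and refine the equivalence $P(A-\lambda B)Q$ to a congruence $P(A-\lambda B)P^*$ by exploiting the Hermitian symmetry. Since $A$ and $B$ are Hermitian, one has $(A-\lambda B)^* = A - \bar\lambda B$, i.e., the pencil is equal to the adjoint of its conjugate. This imposes strong constraints on the KCF: the Jordan structure at a nonreal eigenvalue $\lambda_0$ must duplicate the one at $\bar\lambda_0$, and the left minimal indices must coincide as a multiset with the right minimal indices. My first step is to establish these structural pairings by showing that $A - \lambda B$ and $(A-\bar\lambda B)^*$ must have the same Kronecker canonical form and invoking uniqueness in Theorem~\ref{thm:kcf}.

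Next I would reduce block by block, producing the congruence one diagonal group at a time. For the singular part, I pair each right minimal block $L_{m_i}(\lambda)$ with a matching transposed block $L_{m_i}(\lambda)^\top$ and construct a congruence that turns the pair into the antidiagonal block in \eqref{eq:singpart}. For the regular part with nonreal eigenvalues, I pair each Jordan block at $\lambda_0$ (with $\operatorname{Im}\lambda_0>0$) with its counterpart at $\bar\lambda_0$ and transform the pair by congruence into the antidiagonal form $R_{\rm comp}(\lambda)$. For each Jordan block at a real finite eigenvalue $\mu_i$, I first bring it into the pencil form $J_{n_i}(\mu_i)-\lambda I_{n_i}$ and then conjugate by a product of the Hermitian reverse-identity (``sip'') matrix and a suitable scaling to land in $\sigma_i Z_{\mu_i,n_i}(\lambda)$; the sign $\sigma_i\in\{+1,-1\}$ is intrinsic to the block and cannot be absorbed by a congruence since any such transformation preserves the signature of the Hermitian matrix obtained by evaluating the pencil at $\lambda=\mu_i$ (after a suitable shift). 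Infinite-eigenvalue blocks are treated analogously, producing the $\sigma_{r+j}N_{n_{r+j}}$ summands.

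The main obstacle will be establishing the sign characteristic rigorously, i.e., proving that the signs $\sigma_1,\dots,\sigma_{r+s}$ are uniquely determined up to permutation among blocks of identical type. This requires a Hermitian invariant preserved by $*$-congruence. A standard route is to define, for each real eigenvalue $\mu$ of given Jordan size, a natural Hermitian form on the corresponding root subspace whose signature equals the sum of the $\sigma$s attached to the blocks of that size at $\mu$; an analogous construction handles $\infty$. A second nontrivial point is the coincidence of left and right minimal indices under Hermiticity, which follows by comparing the KCF of $A-\lambda B$ with that of $(A-\bar\lambda B)^*$ and using that transposition swaps left and right minimal indices while conjugation leaves them unchanged. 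The remaining block-wise congruences reduce to direct verifications and form the technical, but routine, bulk of the argument.
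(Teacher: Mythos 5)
This statement is a classical canonical form that the paper does not prove: it is cited from Thompson~\cite{Tho76} (see also \cite{LanR05a,GohLR05}), so there is no in-paper proof to compare against. Evaluated on its own merits, your outline correctly identifies the structural constraints that Hermiticity imposes on the KCF (conjugate pairing of non-real Jordan structure, coincidence of left and right minimal indices) and correctly flags the sign characteristic as the key new invariant, with a reasonable route via signatures of Hermitian forms on root subspaces.

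However, there is a genuine gap at the heart of the plan. You move from ``KCF with matched block pairs'' to ``reduce block by block, producing the congruence one diagonal group at a time,'' but that presupposes that the pencil already admits a \emph{congruent} block-diagonalization whose diagonal groups match the KCF decomposition. This is precisely what needs to be proved and it is the technical core of Thompson's theorem; it does not follow from the equivalence-level KCF plus the observed pairings. Knowing that $P(A-\lambda B)Q = K(\lambda)$ with $Q^*(A-\lambda B)P^* = K(\bar\lambda)^*$ gives structural constraints on $K$, but does not by itself let you replace the two-sided transformation $(P,Q)$ by a single $(P,P^*)$ that respects the block splitting. The standard proofs obtain the congruent decomposition by separate arguments: for the regular part one shows that root subspaces of distinct eigenvalues (and the conjugate pairing of root subspaces for non-real eigenvalues) are automatically $B$-orthogonal, which yields the splitting by congruence; for the singular part one needs a dedicated lemma showing that a ``minimal-index'' deflating pair can be extracted by congruence. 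Without establishing this intermediate congruent decomposition, the subsequent block-wise congruences have nothing to act on. A second, smaller inaccuracy: the justification ``the sign cannot be absorbed since congruence preserves the signature of $A-\mu_i B$'' is insufficient when several blocks of different sizes share the eigenvalue $\mu_i$ (the evaluation $A-\mu_i B$ is singular and its signature mixes contributions from all blocks); you do acknowledge later that a per-partial-multiplicity Hermitian form is needed, and that acknowledgment, not the signature-at-a-point argument, is the correct one.
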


The blocks $R_{\rm real}(\lambda)$ and $R_{\rm comp}(\lambda)$
in~\eqref{eq:tnf} contain the real and non-real complex finite eigenvalues
of the Hermitian pencil, while the block $R_{\rm inf}(\lambda)$ contains
blocks associated with the eigenvalue infinity. It follows that the spectrum
of a Hermitian pencil is symmetric with respect to the real line, i.e., if
$\lambda_0\in\mathbb C\setminus\mathbb R$ is an eigenvalue of a Hermitian pencil,
then so is $\overline{\lambda}_0$ with the same multiplicity.
The values $\sigma_1,\dots,\sigma_{r+s}\in\{+1,-1\}$ attached to blocks
associated with real finite eigenvalues or the eigenvalue infinity are
additional invariants of Hermitian pencils under congruence. Their collection
is referred to as the \emph{sign characteristic} of the Hermitian pencil.
For further information on the sign characteristic we refer to \cite{LanR05a}
or to \cite{GohLR05}, where the topic is discussed in terms of related
$H$-self-adjoint matrices. In comparison to the Kronecker canonical form,
we observe that the minimal indices of singular Hermitian pencils occur in
pairs of equal left and right minimal indices. Therefore, when we
say in the following that a Hermitian pencil has the minimal indices $m_1,\dots,m_k$, we mean that it has exactly this list as left minimal indices
and also as right minimal indices.

If $A-\lambda B$ is a singular Hermitian pencil and if a linear combination of
$A$ and $B$ is positive semidefinite, then it follows immediately by inspection from
the Thompson canonical form that all minimal indices are equal to zero. In that case,
singularity of the pencil $A-\lambda B$ is equivalent to $A$ and $B$ having a common
kernel. The challenging case is thus when no linear combination of $A$ and $B$ is
positive semidefinite.

\section{Rank-completing perturbations of Hermitian singular pencils}
\label{sec:rankcomplete}
Let $A,B\in\mathbb C^{n,n}$ such that $A-\lambda B$ is a singular Hermitian pencil
with normal rank $n-k$ for some $k>0$. In this section
we investigate the effect of \emph{structure-preserving rank-completing perturbations},
i.e., generic rank-$k$ perturbations of the form
\begin{equation}
\label{pert1awithtau}
\widetilde A-\lambda \widetilde B := A-\lambda B+\tau \, (UD_AU^*-\lambda \, UD_BU^*),
\end{equation}
where $D_A,D_B\in\mathbb C^{k,k}$ are Hermitian matrices such that $D_A-\lambda D_B$ is regular, $U\in \mathbb C^{n,k}$ has full column rank, and $\tau\in\mathbb R$ is nonzero.

The following result is an extension of \cite[Thm.~4.3]{HMP19} to the Hermitian case.
The proof combines ideas from the proof of \cite[Thm.~4.2]{mehl2017parameter}, where the special
case $k=1$ has been covered for Hermitian pencils,
and from \cite[Thm.~4.3]{HMP19}, where a corresponding result for pencils without additional symmetry structure
has been presented. Note that the latter result cannot be directly applied in
the current context, because it is formulated for {\em strict equivalence} of pencils--a transformation that will not preserve the Hermitian structure in general.
Instead, structure-preserving {\em congruence transformations} should be considered, but this leads to a different concept of genericity.

\begin{proposition}\label{prop:genpert} (Cf.~\cite[Prop.~4.2]{HMP19})
Let $A-\lambda B$ be an $n\times n$ singular Hermitian matrix pencil of normal rank $n-k$.
Then there exists a generic set $\Omega_1\subseteq\mathbb C^{n,k}$
such that for each $U\in\Omega_1$ there exists a nonsingular matrix $P\in\mathbb C^{n,n}$ such that
\[
P\,(A-\lambda B)\,P^*=\left[\begin{array}{cc}R(\lambda)&0\\ 0& S(\lambda)\end{array}\right]
\quad\mbox{and}\quad
PU=\left[\begin{array}{c}0\\ \widetilde U\end{array}\right],
\]
where $R(\lambda)$ and $S(\lambda)$ are the regular and singular parts of $A-\lambda B$,
respectively, and $PU$ is partitioned conformably with $P\,(A-\lambda B)\,P^*$.
\end{proposition}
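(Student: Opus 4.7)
The plan is to first bring $A-\lambda B$ to Thompson canonical form and then construct a secondary congruence $P_1$ adapted to $U$ via a Sylvester-equation argument. By Theorem~\ref{thm:thompson}, choose nonsingular $P_0$ with $P_0(A-\lambda B)P_0^* = \mathrm{diag}(R_0(\lambda),S_0(\lambda))$, where $R_0$ is the regular part of size $n_r\times n_r$ and $S_0 = \bigoplus_{i=1}^k \bigl[\begin{smallmatrix}0 & L_{m_i}\\ L_{m_i}^\top & 0\end{smallmatrix}\bigr]$ is the TCF singular part. Partition $\widehat U := P_0 U = \bigl[\begin{smallmatrix}U_R\\ U_S\end{smallmatrix}\bigr]$ with $U_R \in \C^{n_r,k}$ and $U_S \in \C^{n-n_r,k}$. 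The task becomes the construction of a nonsingular $P_1 = \bigl[\begin{smallmatrix}I & P_{12}\\ P_{21} & P_{22}\end{smallmatrix}\bigr]$ such that $P_1\,\mathrm{diag}(R_0,S_0)\,P_1^*$ remains block diagonal of the same sizes (with regular top and singular bottom) and $U_R + P_{12} U_S = 0$; then $P := P_1 P_0$ fulfils the proposition.

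Writing out the off-diagonal block of the transformed pencil, block-diagonality becomes the single pencil equation $R_0(\lambda) P_{21}^* + P_{12} S_0(\lambda) P_{22}^* = 0$, whose coefficients in $\lambda^0$ and $\lambda^1$ yield a pair of constant matrix equations. After a harmless shift in $\lambda$ we may assume $A_{R_0}$ is invertible; eliminating $P_{21}^*$ gives the Sylvester-type equation
\[
P_{12}\, B_{S_0} \;=\; C\, P_{12}\, A_{S_0}, \qquad C := B_{R_0} A_{R_0}^{-1},
\]
with $P_{21}^* = -A_{R_0}^{-1} P_{12} A_{S_0} P_{22}^*$ thereby determined. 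Using the polynomial right null vectors $v_1(\lambda),\ldots,v_k(\lambda)$ of $S_0$ (one per singular block, of degree $m_i$) together with the eigenvectors $u_\alpha$ of $C$, rank-one solutions $P_{12} = u_\alpha v_i(1/\alpha)^\top$ are readily constructed and shown to span a solution space $\mathcal V$ of dimension exactly $n_r \cdot k$, matching the dimension of $\C^{n_r,k}$.

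The normalization condition becomes the linear equation $P_{12} U_S = -U_R$ for $P_{12}\in\mathcal V$. The evaluation map $\mathcal V\to\C^{n_r,k}$, $P_{12}\mapsto P_{12}U_S$, is generically an isomorphism — failure of injectivity cuts out a proper algebraic subvariety in $U$ — so for $U$ in the corresponding open set we obtain $P_{12}$ uniquely. Crucially, any $P_{12}\in\mathcal V$ automatically preserves the singular structure of the transformed bottom-right block: combining the Sylvester identity $P_{12}B_{S_0} = C P_{12} A_{S_0}$ with the null-vector relation $A_{S_0}v_i(\lambda) = \lambda B_{S_0}v_i(\lambda)$ yields $P_{12} A_{S_0} v_i(\lambda) = \lambda C\, P_{12} A_{S_0} v_i(\lambda)$, which (since $I - \lambda C$ is invertible for generic $\lambda$) forces $P_{12} A_{S_0} v_i \equiv 0$. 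Hence each $v_i$ remains a polynomial right null vector of the new singular block, and the normal rank $n - n_r - k$ is preserved.

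To finish, $P_{22}$ is chosen invertible (free choice), and the invertibility of $P_1$ follows from the Schur complement formula $\det P_1 = \det(P_{22})\det\bigl(I + A_{S_0}P_{12}^*A_{R_0}^{-1}P_{12}\bigr)$, whose second factor is a polynomial in the (already fixed) $P_{12}$ and hence nonzero for $U$ outside a proper algebraic set. The set $\Omega_1$ is defined as the intersection of the finitely many resulting open conditions (full column rank of $U_S$, invertibility of the evaluation map, and non-vanishing of the above determinant), and is therefore generic in the sense of Section~\ref{sec:prelim}. The main obstacle to overcome — not present in the non-Hermitian setting of \cite[Thm.~4.3]{HMP19}, where left and right transformations can be chosen independently — is that congruence couples the two sides, so the block-diagonality Sylvester constraint and the normalization $P_{12}U_S = -U_R$ cannot be imposed separately; this is resolved by the dimension count $\dim \mathcal V = n_r \cdot k$, which is the Hermitian-specific extension of \cite[Thm.~4.2]{mehl2017parameter} (the case $k=1$).
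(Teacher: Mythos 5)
Your proof is correct and follows the same overall strategy as the paper's --- bring the pencil to Thompson canonical form, then construct a further congruence that annihilates the top block of $U$ while keeping the pencil block-diagonal --- but it packages the construction differently. The paper splits the singular part into the 3-block form $\operatorname{diag}\big(R(\lambda),\smtxa{cc}{0&\mathcal L\\ \mathcal L^\top&0}\big)$ and builds the congruence explicitly as a product $\mathcal Y\mathcal X$ with a concrete Krylov-type parametrization of the $(1,3)$-block $X$, whereas you work with a $2$-block partition and derive a single Sylvester-type identity $P_{12}B_{S_0}=CP_{12}A_{S_0}$ for the $(1,2)$-block. Your dimension count $\dim\mathcal V = n_r\,k$ is correct: checking block-by-block, the columns of $P_{12}$ sitting in the $L_{m_i}^\top$-part are forced to vanish, and those in the $L_{m_i}$-part satisfy a Krylov recursion with a single free $\C^{n_r}$-vector per block --- which is exactly the same $rk$ degrees of freedom the paper's Krylov parametrization isolates. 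Using a shift of $\lambda$ in place of the paper's rotation M\"obius transformation is likewise harmless.

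Two places deserve tightening, though. First, you assert that the evaluation map $\mathcal V\to\C^{n_r,k}$, $P_{12}\mapsto P_{12}U_S$, is generically an isomorphism because ``failure cuts out a proper algebraic subvariety'' --- but that is exactly the genericity claim one is supposed to prove, and it requires exhibiting a concrete witness $U_S$ for which the corresponding polynomial is nonzero. The paper supplies this witness (the choice $\vecu_1^{(i)}=e_i$, $\vecu_j^{(i)}=0$ for $j\ge 2$, which makes the relevant $rk\times rk$ matrix the identity); without such a witness your ``generically'' is an assertion, not a proof. Second, your null-vector argument for preservation of the singular structure only shows the transformed $(2,2)$-block has at least $k$ polynomial right null vectors; the clean and complete argument (the one the paper uses) is that the transformed pencil is still congruent to $\operatorname{diag}(R_0,S_0)$ and is block-diagonal with regular top, so uniqueness of the Thompson form forces the bottom block to be congruent to $S_0$, after which a block-diagonal congruence (which preserves the zero first block of $PU$) restores $S_0$ exactly. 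Incidentally, it is worth noting that for your Sylvester solutions the top block is exactly $R_0$ and not merely congruent to it: since the columns of $P_{12}$ in the $L_{m_i}^\top$-positions are zero, one has $P_{12}S_0(\lambda)P_{12}^*\equiv 0$, so the $(1,1)$-block of $P_1\operatorname{diag}(R_0,S_0)P_1^*$ is unchanged.
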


\begin{proof}
We distinguish the following two cases.

\noindent
{\it Case (1): The pencil $A-\lambda B$ does not have the eigenvalue $\infty$.}

Applying an appropriate congruence otherwise, we may assume without loss of generality that $A-\lambda B$ is in a permuted
version of the Thompson canonical form of Theorem~\ref{thm:thompson}, i.e., we have
\[
A-\lambda B=\left[\begin{array}{ccc}R(\lambda)&0&0\\ 0&0&\mathcal L(\lambda)\\
0&\mathcal L(\lambda)^\top & 0\end{array}\right],\quad U=\left[\begin{array}{c}U_1\\ U_2\\ U_3\end{array}\right],
\]
where $\mathcal L(\lambda)=\operatorname{diag}\big(L_{m_1}(\lambda),\dots, L_{m_k}(\lambda)\big)$ for some values $m_1,\dots,m_k\in\mathbb N$
(which are the minimal indices of $A-\lambda B$), and where $U$ is partitioned conformably with $A-\lambda B$.
Assume that $r$ is the size of $R(\lambda)$, i.e., we have $n=r+2m+k$ for $m=m_1+\dots+m_k$.
We aim to eliminate the block entry $U_1$ in $U$. Applying a congruence transformation with a matrix $\mathcal X$ of the form
\[
\mathcal X=\left[\begin{array}{ccc}I_r&0&X\\ 0&I_m&0\\ 0&0&I_{m+k}\end{array}\right],
\]
where $X\in\mathbb C^{r,m+k}$ will be specified later, we obtain
\[
\mathcal X\,(A-\lambda B)\,\mathcal X^*=\left[\begin{array}{ccc}R(\lambda)&X\mathcal L(\lambda)^\top&0\\
\mathcal L(\lambda)X^*&0&\mathcal L(\lambda)\\ 0&\mathcal L(\lambda)^\top &0\end{array}\right],\quad
\mathcal XU=\left[\begin{array}{c}U_1+XU_3\\ U_2\\ U_3\end{array}\right].
\]
This transformation introduces an unwanted entry in the $(1,2)$- and $(2,1)$-block positions of $\mathcal X\,(A-\lambda B)\,\mathcal X^*$.
We eliminate this block entry by applying a congruence transformation with a matrix $\mathcal Y$ of the form
\[
\mathcal Y=\left[\begin{array}{ccc}I_r&0&0\\ Y^*&I_m&0\\ 0&0&I_{m-k}\end{array}\right],
\]
where, again, $Y\in\mathbb C^{m,r}$ will be specified later. This yields
\[
\mathcal Y\mathcal X\,(A-\lambda B)\,\mathcal X^*\mathcal Y^*=\left[\begin{array}{ccc}R(\lambda)&X\mathcal L(\lambda)^\top+R(\lambda)Y&0\\
Y^*R(\lambda)+ \mathcal L(\lambda)X^*& M(\lambda)&\mathcal L(\lambda)\\ 0 &\mathcal L(\lambda)^\top &0\end{array}\right]\]
and
\[
\mathcal Y\mathcal XU=\left[\begin{array}{c}U_1+XU_3\\ \widetilde U_2\\ U_3\end{array}\right],
\]
where $M(\lambda)=Y^*X\mathcal L(\lambda)^\top+L(\lambda)X^*Y$ and $\widetilde U_2=U_2+Y^*U_1+Y^*XU_3$.
Our aim is to choose $X$ and $Y$ as solutions to the equations
\begin{equation}\label{eq:pencilmatrix}
X\mathcal L(\lambda)^\top+R(\lambda)\,Y = 0\quad\mbox{and}\quad U_1+XU_3=0.
\end{equation}
Let $A_r$ and $B_r$ be the coefficients of $R(\lambda)$ and define
\[
G:=\left[\begin{array}{ccccccc}0&I_{m_1}&&&&&\\ &&0&I_{m_2}&&&\\ &&&&\ddots &&\\ &&&&&0&I_{m_k}\end{array}\right],\
H:=\left[\begin{array}{ccccccc}I_{m_1}&0&&&&&\\ &&I_{m_2}&0&&&\\ &&&&\ddots &&\\ &&&&&I_{m_k}&0\end{array}\right],
\]
i.e., we have $R(\lambda)=A_r-\lambda B_r$ and $\mathcal L(\lambda)=G-\lambda H$. Then from~\eqref{eq:pencilmatrix}
we obtain  three matrix equations
\[
XG^\top+A_rY=0,\quad X^*H^\top+B_rY=0,\quad U_1+XU_3=0.
\]
Since $\infty$ is not an eigenvalue of $A-\lambda B$, the matrix $B_r$ is invertible and hence the second
equation can be solved for $Y$ yielding $Y=-B_r^{-1}XH^\top$. Inserting this into the first equation and multiplying it
with $B_r^{-1}$ from the left yields
\begin{equation}\label{7.1.23}
B_r^{-1}XG^\top=B_r^{-1}AB_r^{-1}XH^\top.
\end{equation}
Writing $B_r^{-1}X=[\vecx_1^{(1)} \, \dots \ \vecx_{m_1+1}^{(1)} \ \ \dots \ \ \vecx_1^{(k)}\, \dots \ \vecx_{m_k+1}^{(k)}]$
and comparing the columns on both sides of~\eqref{7.1.23}, we obtain
\begin{align*}
B_r^{-1}X & =[\,\vecx_1^{(1)} \; \ B_r^{-1}A_r\vecx_1^{(1)}\,\dots\, \ (B_r^{-1}A_r)^{m_1}\vecx_{1}^{(1)} \; \dots \\
 & \phantom{MMMM} \vecx_1^{(k)} \;\; B_r^{-1}A_r\vecx_1^{(k)}\,\dots\, \ (B_r^{-1}A_r)^{m_k}\vecx_{1}^{(k)}\,].
\end{align*}
It remains to fix the columns $\vecx_1^{(i)}$, $i=1,\dots,k$, such that also the third equation $U_1+XU_3=0$, or, equivalently,
$B_r^{-1}U_1=-B_r^{-1}XU_3=0$ is satisfied. To this end, let us use the partitions
\[
U_3=\left[\begin{array}{c}U_3^{(1)}\\[-1mm] \vdots\\ U_3^{(k)}\end{array}\right],\quad
U_3^{(i)}=\left[\begin{array}{c}(\vecu_1^{(i)})^\top\\[-1mm] \vdots\\ (\vecu_{m_i+1}^{(i)})^\top\end{array}\right],\;i=1,\dots,k.
\]
Then, using the identity $\operatorname{vec}(XYZ)=(Z^\top \otimes X)\operatorname{vec}(Y)$
for the vectorization operator and the Kronecker product, we obtain
\begin{eqnarray*}
\operatorname{vec}(B_r^{-1}U_1)&=&\operatorname{vec}(B_r^{-1}XU_3)
=\operatorname{vec}\Big(\sum_{i=1}^k\sum_{j=1}^{m_i+1}(B_r^{-1}A_r)^{j-1}\vecx_1^{(i)}(\vecu_j^{(i)})^\top\Big)\\
&=&\sum_{i=1}^k\sum_{j=1}^{m_i+1} \big(\vecu_j^{(i)}\otimes (B_r^{-1}A_r)^{j-1}\big)\,\vecx_1^{(i)}
=M\cdot\left[\begin{array}{c}\vecx_1^{(1)}\\[-1mm] \vdots\\ \vecx_1^{(k)}\end{array}\right],
\end{eqnarray*}
where
\[
M=\Bigg[\sum_{j=1}^{m_1+1}\big(\vecu_j^{(1)}\otimes (B_r^{-1}A_r)^{j-1}\big) \ \dots \
\sum_{j=1}^{m_k+1}\big(\vecu_j^{(k)}\otimes (B_r^{-1}A_r)^{j-1}\big)\Bigg]
\]
is an $rk\times rk$ matrix whose determinant is a polynomial in the real and imaginary parts of the entries of $U_3$.
This polynomial is nonzero, because the particular choice $\vecu_1^{(i)}=e_i$ (the $i$th canonical basis vector) and $\vecu_j^{(i)}=0$ for $j=2,\dots,m_i+1$, $i=1,\dots,k$,
yields $M=I_{rk}$. Thus, generically with respect to the real and imaginary parts of the entries of the matrix $U_3$, the
columns $\vecx_1^{(1)},\dots,\vecx_1^{(k)}$ can be chosen such that $U_1+XU_3=0$ and $X\mathcal L(\lambda)^\top+R(\lambda)Y=0$
for
\begin{align*}
X & = B_r\ [\,\vecx_1^{(1)} \ \ B_r^{-1}A_r\vecx_1^{(1)}\dots \ (B_r^{-1}A_r)^{m_1}\vecx_{1}^{(1)} \ \dots \\
& \phantom{MMMMMMMM} \ \vecx_1^{(k)} \ \ B_r^{-1}A_r\vecx_1^{(k)}\dots \ (B_r^{-1}A_r)^{m_k}\vecx_{1}^{(k)}\,]
\end{align*}
and $Y=-B_r^{-1}XH^\top$. Thus, we have shown that for these choices of $X$ and $Y$ we have
\[
\mathcal Y\,\mathcal X\,(A-\lambda B)\,\mathcal X^*\,\mathcal Y^*=\left[\begin{array}{ccc}R(\lambda)&0&0\\
0& M(\lambda)&\mathcal L(\lambda)\\0 &\mathcal L(\lambda)^\top &0\end{array}\right]\quad\mbox{and}\quad
\mathcal Y\mathcal XU=\left[\begin{array}{c}0\\ \widetilde U_2\\ U_3\end{array}\right].
\]
Since $A-\lambda B$ and $\mathcal Y\,\mathcal X\,(A-\lambda B)\,\mathcal X^*\,\mathcal Y^*$ are congruent, its singular
parts must be congruent as well, so applying an appropriate congruence on the singular part of
$\mathcal Y\,\mathcal X\,(A-\lambda B)\,\mathcal X^*\,\mathcal Y^*$ (note that the corresponding transformation will not
destroy the zero in the first block entry of $\mathcal Y\mathcal XU$), we finally have proven the statement of the theorem.

\noindent
{\it Case (2): The pencil $A-\lambda B$ does have the eigenvalue $\infty$.}
This case can be reduced to the first case with the help of a M\"obius transformation.
Indeed, choose $\zeta_1$, $\zeta_2\in\mathbb R$ such that $\zeta_1^2+\zeta_2^2=1$ and the pencil
\[
\mathrm M_{\zeta_1,\zeta_2}(A-\lambda B):=\zeta_1 A+\zeta_2 B-\lambda \, (\zeta_1 B-\zeta_2 A)
\]
does not have the eigenvalue $\infty$. This is possible, because the considered M\"obius transformation
does not change the Kronecker structure of the pencil except for rotating the spectrum around the
origin with a specific angle; see \cite{MMMM15}. Furthermore, the transformed pencil is still
Hermitian, as $\zeta_1$ and $\zeta_2$ are taken to be real. Then using the result from Case~1 on this
pencil and then applying the inverse M\"obius transformation
\[
\mathrm M_{\zeta_1,-\zeta_2}(C-\lambda D):=\zeta_1 C-\zeta_2 D-\lambda \, (\zeta_1 D+\zeta_2 C)
\]
proves the result under the presence of the eigenvalue $\infty$.
\end{proof}

\begin{theorem}\label{thm:nfp} (Cf.~\cite[Thm.~4.3]{HMP19})
Let $A-\lambda B$ be an $n\times n$ singular Hermitian pencil of normal rank $n-k$
and let $D_A,D_B\in\mathbb C^{k,k}$ be such that $D_A-\lambda D_B$ is regular and Hermitian and such that all
eigenvalues of $D_A-\lambda D_B$ are distinct from the eigenvalues of $A-\lambda B$.
Then there exists a generic set $\Omega\subseteq\mathbb C^{n,k}$ such that for all
$U\in\Omega$ the following statements hold for the perturbed pencil
$\widetilde A-\lambda\widetilde B:=A-\lambda B+\tau \, (UD_AU^*-\lambda \, UD_BU^*)$:
\begin{enumerate}
\item There exists a nonsingular matrix $P$ such that for each $\tau\in\mathbb R\setminus\{0\}$ we have
\begin{equation}\label{eq:nfp}
P \,(\widetilde A-\lambda\widetilde B)\, P^*=\left[\begin{array}{cc}R(\lambda)&0\\
0& R_{\rm new}(\lambda)\end{array}\right],
\end{equation}
where $R(\lambda)$ is the regular part of the original pencil $A-\lambda B$,
and $R_{\rm new}(\lambda)$ is regular and all its eigenvalues are distinct from the eigenvalues of $R(\lambda)$.
\item If $\lambda_0$ is an eigenvalue of $A-\lambda B$, i.e., $\mathrm{rank}(A-\lambda_0 B)<n-k$ if $\lambda_0\ne\infty$ or
$\mathrm{rank}(B)<n-k$ if $\lambda_0=\infty$,
then $\lambda_0$ is an eigenvalue of~\eqref{pert1awithtau} for each $\tau\in\mathbb R\setminus\{0\}$. Furthermore,
if $(x_1,\dots,x_\ell)$ is a left or right Jordan chain of $\widetilde A-\lambda \widetilde B$ associated with $\lambda_0$ for some $\tau\in\mathbb R\setminus\{0\}$,
then it is also a left resp. right Jordan chain of $\widetilde A-\lambda \widetilde B$ associated with $\lambda_0$ for any $\tau\in\mathbb R\setminus\{0\}$.
In addition, $U^*x_i=0$ for $i=1,\dots,\ell$.
\end{enumerate}
\end{theorem}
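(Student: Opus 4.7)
The plan is to apply Proposition~\ref{prop:genpert} and observe that, in the resulting normal form, the Hermitian rank-completing perturbation decouples cleanly. Fix $U$ in the generic set $\Omega_1$ furnished by that proposition, so that there is a nonsingular $P$ (depending on $A$, $B$, $U$ but not on $\tau$) with
\[
P\,(A-\lambda B)\,P^* = \left[\begin{array}{cc} R(\lambda) & 0 \\ 0 & S(\lambda) \end{array}\right], \qquad PU = \left[\begin{array}{c} 0 \\ \widetilde U \end{array}\right].
\]
A direct computation then yields
\[
P\,(\widetilde A - \lambda\widetilde B)\,P^* = \left[\begin{array}{cc} R(\lambda) & 0 \\ 0 & R_{\mathrm{new}}(\lambda) \end{array}\right], \qquad R_{\mathrm{new}}(\lambda) := S(\lambda)+\tau\,\widetilde U\,(D_A-\lambda D_B)\,\widetilde U^*.
\]
Part~1 thus reduces to showing that, after intersecting $\Omega_1$ with a further generic set to obtain $\Omega$, the pencil $R_{\mathrm{new}}(\lambda)$ is regular and shares no eigenvalues with $R(\lambda)$.

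To verify regularity, I would exploit that by~\eqref{eq:singpart} the normal rank deficiency of $S(\lambda)$ is exactly $k$, while $\widetilde U(D_A-\lambda D_B)\widetilde U^*$ is a rank-$k$ Hermitian pencil whenever $\widetilde U$ has full column rank (which is generic) and $D_A-\lambda D_B$ is regular (which holds by hypothesis). Exhibiting one explicit witness $\widetilde U_0$ (of the same ``identity-like'' kind used inside the proof of Proposition~\ref{prop:genpert}) for which $R_{\mathrm{new}}$ is regular then shows that the leading $\lambda$-coefficient of $\det R_{\mathrm{new}}(\lambda)$ is a polynomial in the real and imaginary parts of the entries of $\widetilde U$ that does not vanish identically, and therefore is nonzero on a generic set. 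For spectral disjointness, for each eigenvalue $\lambda_0$ of $R(\lambda)$ (including $\lambda_0=\infty$), $D_A - \lambda_0 D_B$ is invertible by the hypothesis on the spectra of $D_A-\lambda D_B$ and $A-\lambda B$, and the condition that $\lambda_0$ is an eigenvalue of $R_{\mathrm{new}}$ is a single polynomial equation in the entries of $\widetilde U$; an explicit witness again shows that this polynomial is not identically zero. Intersecting the finitely many resulting generic conditions with $\Omega_1$ yields the required $\Omega$.

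Part~2 then follows directly from the block-diagonal structure. If $\lambda_0$ is an eigenvalue of $A-\lambda B$, then it is an eigenvalue of $R(\lambda)$ but, by Part~1, not of $R_{\mathrm{new}}(\lambda)$; consequently every right Jordan chain of $P\,(\widetilde A-\lambda\widetilde B)\,P^*$ at $\lambda_0$ has the form $\widetilde x_i = \left[\begin{array}{c} v_i \\ 0 \end{array}\right]$, where $(v_1,\dots,v_\ell)$ is a Jordan chain of $R(\lambda)$ at $\lambda_0$. Since neither $R(\lambda)$ nor $P$ involves $\tau$, the chain $x_i = P^*\widetilde x_i$ in the original coordinates is also independent of $\tau$, which proves the first assertion of Part~2. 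The orthogonality is then immediate:
\[
U^* x_i \;=\; (PU)^*\,\widetilde x_i \;=\; \bigl[\,0\ \ \widetilde U^*\,\bigr]\left[\begin{array}{c} v_i \\ 0 \end{array}\right] \;=\; 0.
\]
Left Jordan chains are handled identically, for example by passing to adjoints and using that $\widetilde A-\lambda\widetilde B$ is Hermitian.

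The main obstacle will be the genericity argument in Part~1. Unlike in~\cite[Thm.~4.3]{HMP19}, where the perturbation involves two independent matrices $U$ and $V$, the Hermitian constraint forces $V=U$, and the polynomials whose nonvanishing must be checked live only in the real and imaginary parts of the entries of the single matrix $\widetilde U$. Producing explicit witness matrices $\widetilde U_0$ that certify that these polynomials are not identically zero is therefore the most delicate step, and it is where the Hermitian structure genuinely narrows the parameter space relative to the Part~I argument.
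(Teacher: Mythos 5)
Your outline matches the paper's strategy: apply Proposition~\ref{prop:genpert} to decouple the pencil, set $R_{\rm new}(\lambda)=S(\lambda)+\tau\,\widetilde U(D_A-\lambda D_B)\widetilde U^*$, argue genericity, and deduce Part~2 from the block structure. The treatment of Part~2 is essentially correct. However, there are gaps in Part~1 that are not merely cosmetic.

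First, the claim that a single witness for which $R_{\rm new}$ is regular ``shows that the leading $\lambda$-coefficient of $\det R_{\rm new}(\lambda)$ is a polynomial $\dots$ that does not vanish identically'' is a non sequitur: a pencil can be regular while its leading $\lambda$-coefficient vanishes (whenever $\infty$ is an eigenvalue). The paper sidesteps this entirely by a cleaner device: for a fixed eigenvalue $\lambda_0$ of $R(\lambda)$ of geometric multiplicity $\ell$, it proves generically $\operatorname{rank}(\widetilde A-\lambda_0\widetilde B)\ge n-\ell$, which simultaneously gives $\det R_{\rm new}(\lambda_0)\ne 0$ (hence regularity) and that $\lambda_0$ is not an eigenvalue of $R_{\rm new}$; inducting over the distinct eigenvalues of $R$ then finishes. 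You should collapse your two separate genericity arguments into this single one.

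Second, and more seriously, the entire burden of the proof lies in producing the witness $\widetilde U_0$, and you neither construct it nor indicate the right ansatz: the ``identity-like'' matrices from the proof of Proposition~\ref{prop:genpert} (which serve to make a certain $M=I_{rk}$ there) are not the relevant choice here. The paper's witness is $\widetilde U_2=[\,e_{m_1+1}\ \ e_{2m_1+m_2+2}\ \dots\,]\cdot Q$, where $Q$ is unitary with $Q(D_A-\lambda_0 D_B)Q^*$ in Schur form; with that choice $S(\lambda_0)+\tau\widetilde U_2(D_A-\lambda_0 D_B)\widetilde U_2^*$ becomes block upper triangular and its determinant can be computed explicitly as $(-1)^{m_1+\dots+m_k}\lambda_0^{2(m_1+\dots+m_k)}\prod_i\alpha_i\neq 0$. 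Since $\lambda_0$ can be nonreal, $D_A-\lambda_0 D_B$ is generally not Hermitian, so the Schur form (not a spectral decomposition) is required; and the appearance of $\lambda_0^{2M}$ in the witness determinant is precisely why the paper must first exclude the eigenvalues $0$ and $\infty$ and handle those via a real Möbius transformation as in Proposition~\ref{prop:genpert}. Your proof does not address this case distinction at all. Without the explicit witness and the Möbius reduction, Part~1 remains unproved.
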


\begin{remark} \rm
With respect to the second point, we note that a left Jordan chain
associated with an eigenvalue $\lambda_0$ is also a right Jordan chain
associated with the eigenvalue $\overline\lambda_0$. In particular, if $\lambda_0$ is real then left and right Jordan chains coincide.
\end{remark}

\begin{proof}
As in the proof of Proposition~\ref{prop:genpert} we will distinguish two cases.

\noindent
{\it Case (A): The pencil $A-\lambda B$ does not have the eigenvalues $\infty$ or zero.}

Let $\tau\in\mathbb R\setminus\{0\}$ be fixed. Applying Proposition~\ref{prop:genpert},
there generically exists a nonsingular matrix $P$ such that
\begin{equation}\label{eq:help1}
P\,(A-\lambda B)\,P^*=\left[\begin{array}{cc}R(\lambda)&0\\ 0& S(\lambda)\end{array}\right],\quad
PU=\left[\begin{array}{c}0\\ U_2\end{array}\right],
\end{equation}
where $R(\lambda)$ and $S(\lambda)$ are the regular and singular parts of $A-\lambda B$,
respectively, and where $PU$ is partitioned conformably with $P\,(A-\lambda B)\,P^*$.

We first show 1. Let $m_1,\dots,m_k$ be the minimal indices of $A-\lambda B$. We can then assume that $S(\lambda)$ is in the
canonical form of \eqref{eq:singpart}.
Then the perturbed pencil $\widetilde A-\lambda\widetilde B$ satisfies
\[
P\,(\widetilde A-\lambda \widetilde B)\,P = \left[\begin{array}{cc}R(\lambda) & 0\\
0 & R_{\rm new}(\lambda)\end{array}\right],
\]
with $R_{\rm new}(\lambda):= S(\lambda)+\tau \, U_2\,(D_A-\lambda D_B)\,U_2^*$.

Now let $\lambda_0\in\mathbb C$ be an eigenvalue of $R(\lambda)$ of geometric
multiplicity $\ell$, i.e., in particular we have $\det R(\lambda_0)=0$.
We show that there exists a generic set $\Omega_2$ such that for all $U\in\Omega_2$ we have rank$(\widetilde A-\lambda_0 \widetilde B)\ge n-\ell$.
Then we can conclude that for all $U\in\Omega_1\cap\Omega_2$ the corresponding pencil $R_{\rm new}(\lambda)$ is regular and
does not have $\lambda_0$ as an eigenvalue.

To see this, let $Q\in\mathbb C^{k,k}$ be unitary such that $Q\,(D_A-\lambda_0D_B)\,Q^*$ is in Schur form with diagonal elements
$\alpha_1,\dots,\alpha_k\in\mathbb C$ in that order. (Observe that since $\lambda_0$ may be complex, the matrix $D_A-\lambda_0D_B$ is not necessarily Hermitian and so its
eigenvalues may be complex as well.)
Since by hypothesis $\lambda_0$ is not an eigenvalue of $D_A-\lambda D_B$, we have that
$D_A-\lambda_0D_B$ has full rank and hence $\alpha_1,\dots,\alpha_k$ are nonzero. Replacing $U_2$ with
\[
\widetilde U_2=[\,e_{m_1+1} \ \ e_{2m_1+m_2+2} \ \dots \ e_{2m_1+\cdots+2m_{k-1}+m_k+k}\,] \cdot Q
\]
we obtain that the matrix $S(\lambda_0)+\tau \, \widetilde U_2\,(D_A-\lambda_0 D_B)\,\widetilde U_2^*$ is block upper triangular
with diagonal blocks of the form
\[
\left[\begin{array}{cc}0&L_{m_i}(\lambda_0)\\ L_{m_i}(\lambda_0)^\top&
{\scriptsize\left[\begin{array}{cc}\alpha_i&0\\ 0&0\end{array}\right]}\end{array}\right].
\]
It follows that
\[
\det\big(S(\lambda_0)+\tau \, \widetilde U_2\,(D_A-\lambda_0 D_B)\,\widetilde U_2^*\big)
=(-1)^{m_1+\cdots+m_k}\lambda_0^{2m_1+\cdots+2m_k}\cdot\prod_{i=1}^k\alpha_i
\]
which is nonzero as $\lambda_0,\alpha_1,\dots,\alpha_k$ are all nonzero. Thus, we have shown that
for a particular choice $U\in\mathbb C^{n,k}$ (and all $\tau\ne 0$) we have rank$(\widetilde A-\lambda_0 \widetilde B)=n-\ell$ for the
corresponding perturbed pencil. Then by \cite[Lemma 2.1]{MehMRR14} the set $\Omega_2$ of all $U\in\mathbb C^{n,k}$
for which rank$(\widetilde A-\lambda_0 \widetilde B)\ge n-\ell$ is a generic set.
By induction on the number of pairwise distinct eigenvalues of $A-\lambda B$, we arrive at a generic set $\Omega$
with the stated properties in 1) by using that the intersection of finitely many generic sets is still generic.

Next, we show 2. Let $\lambda_0\in\mathbb C$ be an eigenvalue of $A-\lambda B$. As this means that $\lambda_0$ is then an eigenvalue
of the regular part $R(\lambda)$, it follows immediately from 1) that $\lambda_0$ is an eigenvalue
of $\widetilde A-\lambda\widetilde B$ for any $\tau\ne 0$. Now let $\tau\ne 0$ be fixed and let
$(x_1,\dots,x_\nu)$ be a right Jordan chain of $\widetilde A-\lambda\widetilde B$ associated with $\lambda_0$,
i.e., we have
\[
(\widetilde A-\lambda_0\widetilde B)\,x_1=0\quad\mbox{and}\quad (\widetilde A-\lambda_0\widetilde B)\,x_i=\widetilde Bx_{i-1},\quad i=2,\dots,\nu.
\]
Partition
\[
P^{-*}x_i=\left[\begin{array}{c}x_i^{(1)}\\ x_i^{(2)}\end{array}\right]
\]
conformably with the partition in~\eqref{eq:help1}. Since $R_{\rm new}(\lambda_0)$ is invertible,
it easily follows by induction that $x_i^{(2)}=0$ for $i=1,\dots,\nu$. But then $(x_1,\dots,x_\nu)$
is a right Jordan chain of $\widetilde A-\lambda\widetilde B$ associated with $\lambda_0$ for any $\tau\ne 0$.
In particular, due to the special structure of $PU$ and $P^{-*}x_i$ we immediately obtain $U^*x_i=0$
for $i=1,\dots,\nu$. The statement for left Jordan chains is obtained completely analogously.

{\it Case (B): $A-\lambda B$ does have one (or both) of the eigenvalues $\infty$ or $0$.}
This case is proven with the help of a M\"obius transformation as in the proof of Proposition~\ref{prop:genpert}.
\end{proof}

\begin{lemma}\label{lem:Lo} Let $A-\lambda B$ be an $n\times n$ singular Hermitian pencil of normal rank $n-k$
with minimal indices $m_1,\dots,m_k$ and set $M:=m_1+\cdots+m_k$. Let
$\gamma_1,\dots,\gamma_k\in\mathbb C$ be given values that are distinct from the eigenvalues of $A-\lambda B$.
Then there exists a generic set $\Omega\subseteq\mathbb C^{n,k}$ such that for all $U\in\Omega$ the following statements hold:
\begin{enumerate}
\item There exist exactly $M$ pairwise distinct values $\alpha_1,\dots,
\alpha_M\in\mathbb C$ different from the eigenvalues of $A-\lambda B$ such that for each
$\alpha_i$ there exists a nonzero vector $z_i$ with $(A-\alpha_iB)\,z_i=0$ and $U^*z_i=0$.
Furthermore, these values are different from the values $\gamma_1,\dots,\gamma_k$ and also different from each of the
conjugates $\overline{\alpha}_1,\dots,\overline{\alpha}_M$, i.e., we have
$\{\alpha_1,\dots,\alpha_M\}\cap\{\overline{\alpha}_1,\dots,\overline{\alpha}_M\}=\emptyset.$
\item For any given set of
$k$ linearly independent vectors $t_1,\dots,t_k\in\mathbb C^k$ there exist nonzero vectors
$s_1,\dots,s_k$ with $(A-\gamma_i B)\,s_i=0$ and $t_i=U^*s_i$ for $i=1,\dots,k$.
\end{enumerate}
\end{lemma}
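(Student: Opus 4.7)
The plan is to reduce via Proposition~\ref{prop:genpert} to a situation where the singular part is explicit, then express the condition ``$(A-\alpha B)z=0$ with $U^*z=0$'' as a determinantal equation in $\alpha$. After possibly first congruing $A-\lambda B$ into Thompson canonical form, Proposition~\ref{prop:genpert} lets us assume there is a nonsingular $P$ with
\[
P\,(A-\lambda B)\,P^*=\left[\begin{array}{cc}R(\lambda)&0\\ 0&S(\lambda)\end{array}\right],\qquad PU=\left[\begin{array}{c}0\\ \widetilde U\end{array}\right],
\]
where $S(\lambda)$ has the block form of \eqref{eq:singpart} and $\widetilde U\in\C^{2M+k,\,k}$. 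A direct computation shows that for every $\alpha\in\C$ the kernel of $S(\alpha)$ has dimension exactly $k$ and is spanned by the columns of an explicit polynomial matrix $K(\alpha)\in\C^{(2M+k)\times k}$: the $i$-th column $\kappa_i(\alpha)$ is supported on the $i$-th singular block, vanishes on the first $m_i$ entries there, and equals $(1,\alpha,\ldots,\alpha^{m_i})^\top$ on the last $m_i+1$. Writing $z=P^*[y_1;y_2]$ conformably, for $\alpha$ distinct from the eigenvalues of $R$ invertibility of $R(\alpha)$ forces $y_1=0$, so the two conditions reduce to finding $c\in\C^k\setminus\{0\}$ with $\widetilde U^*K(\alpha)\,c=0$. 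The sought values $\alpha_1,\ldots,\alpha_M$ are therefore the roots of
\[
p(\alpha) \,:=\, \det\bigl(\widetilde U^*K(\alpha)\bigr).
\]

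Since the $i$-th column of $\widetilde U^*K(\alpha)$ is a vector of polynomials in $\alpha$ of degree at most $m_i$, the polynomial $p$ has degree at most $M$. The generic conditions I need for part 1 are: (a) $\deg p=M$; (b) $p$ has $M$ pairwise distinct roots; (c) none of these roots coincides with an eigenvalue of $R$ or with one of $\gamma_1,\ldots,\gamma_k$; and (d) $p$ and its coefficient-conjugate $\overline{p}$ (the polynomial obtained by conjugating the coefficients of $p$, whose roots are therefore $\overline{\alpha}_1,\ldots,\overline{\alpha}_M$) share no common root, i.e., $\operatorname{Res}(p,\overline p)\neq 0$. Each of (a)--(d) is a polynomial condition on the real and imaginary parts of the entries of $\widetilde U$, and hence of $U$ through the dependence of $\widetilde U$ on $U$ via $P$.

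To exhibit one specific $\widetilde U$ witnessing all four conditions simultaneously, I observe that the $(j,i)$-entry of $\widetilde U^*K(\alpha)$ depends only on the restriction of the $j$-th column of $\widetilde U$ to the nonzero-support rows of $\kappa_i(\alpha)$. By choosing each column of $\widetilde U$ to be supported only on ``its own'' block and selecting the $m_i+1$ nonzero entries freely, one arranges $\widetilde U^*K(\alpha)=\operatorname{diag}\bigl(q_1(\alpha),\ldots,q_k(\alpha)\bigr)$ with each $q_i$ any prescribed monic polynomial of degree $m_i$. I take the $M$ roots of $q_1,\ldots,q_k$ to be mutually distinct, all in the open upper half-plane, and all distinct from the eigenvalues of $R$ and from $\gamma_1,\ldots,\gamma_k$. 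Then $p=\prod_i q_i$ has $M$ distinct roots in the open upper half-plane, while $\overline p$ has its roots in the open lower half-plane, so (a)--(d) hold simultaneously. Each of (a)--(d) therefore holds on a generic subset of $U$, and their intersection yields the generic $\Omega$ for part 1.

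For part 2, I set $s_i=P^*\bigl[\begin{smallmatrix}0\\ K(\gamma_i)\,c_i\end{smallmatrix}\bigr]$ where $c_i$ solves $\widetilde U^*K(\gamma_i)\,c_i=t_i$. This is solvable provided $\widetilde U^*K(\gamma_i)$ is invertible, which is precisely condition (c) at $\alpha=\gamma_i$, i.e., $p(\gamma_i)\neq 0$, so no enlargement of the generic set is needed. Linear independence of $t_1,\ldots,t_k$ forces each $c_i\neq 0$, and since $K(\gamma_i)$ has full column rank each $s_i\neq 0$. The main obstacle in this plan is condition (d): the statement that a polynomial with complex coefficients shares no root with its coefficient-conjugate is not polynomial in $\widetilde U$ over $\C$ because conjugation is not holomorphic, but it is polynomial in the real and imaginary parts of $\widetilde U$ once $\operatorname{Res}(p,\overline p)$ is expanded, matching the genericity convention of Section~\ref{sec:prelim}; the upper-half-plane exhibit confirms this resultant does not vanish identically.
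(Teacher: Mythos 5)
Your overall strategy mirrors the paper's: reduce to (permuted) Thompson form, span the kernel of $A-\alpha B$ at a non-eigenvalue $\alpha$ by an explicit polynomial matrix supported on the singular blocks, characterize the $\alpha_i$ as the roots of the $k\times k$ determinant $p(\alpha)$, and obtain genericity of the conditions on the roots by exhibiting a single witness. Your witness — choosing all $M$ roots simple, mutually distinct, in the open upper half-plane, and avoiding the eigenvalues of $R$ and the $\gamma_i$ — is a nice streamlining: it verifies (a)--(d) simultaneously, whereas the paper first separates the roots onto circles of distinct radii and then perturbs the $\eps_j$ to complex values to separate the $\alpha_i$ from their conjugates. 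Part~2 of your argument is correct and matches the paper.

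There is, however, a gap in the genericity transfer caused by your detour through Proposition~\ref{prop:genpert}. That proposition produces a transformation $P$ that \emph{depends on $U$}: from its proof, $P$ is built from matrices $\mathcal X,\mathcal Y$ obtained by solving a linear system whose coefficient matrix is polynomial in $U$, so the reduced matrix $\widetilde U$ is a rational (not polynomial) function of $U$. Consequently, the claim that conditions (a)--(d) are ``polynomial conditions on the real and imaginary parts of the entries of $\widetilde U$, and hence of $U$ through the dependence of $\widetilde U$ on $U$ via $P$'' does not follow as stated; a polynomial in $\widetilde U$ becomes only a rational function of $U$, and in addition you would need to check that your particular witness $\widetilde U$ is actually realizable as the image of some $U$ in the generic set of Proposition~\ref{prop:genpert}. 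The clean fix is to drop Proposition~\ref{prop:genpert} entirely: as you yourself observe, invertibility of $R(\alpha)$ forces $y_1=0$, so the regular-block component of $PU$ never enters the expression $U^*z=\widetilde U^*K(\alpha)\,c$ anyway. A \emph{fixed} congruence to Thompson form (independent of $U$, as used in the paper) therefore suffices, keeping $PU$ a fixed linear function of $U$ so that (a)--(d) are genuine polynomial conditions in the real and imaginary parts of $U$.
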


\begin{proof}
1) Up to the first part of the ``furthermore'' part, the statement has already been proven in \cite[Lemma 4.5]{HMP19} even for a
general singular pencil of normal rank $n-k$ without additional structure. However, we need to repeat this proof for the special situation of a Hermitian pencil
to be able to extend the proof to cover the second statement of the furthermore part.

Without loss of generality we may assume that $A-\lambda B$ is in the permuted version
\[
A-\lambda B=\left[\begin{array}{ccc}0&\mathcal L(\lambda)^\top &0\\ \mathcal L(\lambda) & 0 & 0\\ 0&0&R(\lambda)\end{array}\right],\quad
\mathcal L(\lambda)=\operatorname{diag}\big(L_{m_1}(\lambda),\dots, L_{m_k}(\lambda)\big)
\]
of the Thompson canonical form from Theorem \ref{thm:thompson} with $R(\lambda)$ standing for the regular part of $A-\lambda B$.

Let $\alpha\in\mathbb C$ be different from the eigenvalues of $A-\lambda B$. Then the kernel of $A-\alpha B$ has dimension $k$
and a basis is given by the vectors $q_1(\alpha),\dots,q_k(\alpha)$, where
\[
q_j(\alpha)=\left[\begin{array}{c}0\\\hline 1\\[-0.5mm] \alpha\\[-1.5mm] \vdots\\ \alpha^{m_j}\\\hline 0\end{array}\right],
\]
where the zero in the first block entry of $q_j(\alpha)$ is a vector having the length $m_1+\cdots+m_{j-1}+j-1$.

If $z\ne 0$ is a vector satisfying $U^*z=0$ and $(A-\alpha B)\,z=0$, then the second equality implies that has the form
\[z=c_1\,q_1(\alpha)+\cdots+c_k\,q_k(\alpha),\]
where $c=[c_1 \ \, \dots \ \, c_k]^T\ne 0$. The equation $U^*z=0$ can then be rewritten in the form
\begin{equation}\label{eq:gc}
Q(\alpha)\,c=0,
\end{equation}
with a $k\times k$ matrix $Q(\alpha)$ whose $(i,j)$-entry $u_i^*q_j(\alpha)$ is a polynomial in
$\alpha$ having coefficients that are itself polynomials in the real and imaginary parts of the entries of $U$.
It is easily seen (e.g., by choosing the $m_j$-entry of $u_i$ to be nonzero) that this polynomial is of degree
$m_j$ in $\alpha$, generically. Equation \eqref{eq:gc} has a nontrivial solution if and only if
$\det Q(\alpha)=0$. Clearly $\det Q(\alpha)$ is a polynomial in $\alpha$ which generically (with respect to the
real and imaginary parts of the entries of $U$) is of degree $M$.
It follows that $\det Q(\alpha)$ will have exactly $M$ roots $\alpha_1,\dots,\alpha_M$
(counted with multiplicities).

Next, let $\mu\in\mathbb C$ be fixed. Then $\det Q(\mu)$ is a polynomial in the real and imaginary parts of the entries of $U$.
This polynomial is nonzero, because for the particular choice
\[
u_1=e_1, \quad u_2=e_{m_1+2}, \quad \dots, \quad u_k=e_{m_1+\cdots+m_{k-1}+k}
\]
the matrix $Q(\mu)$ is just the $k\times k$ identity. Hence generically with respect to the real and imaginary parts of the
entries of $U$ the value $\mu$ will be distinct from all roots of $Q(\alpha)$ seen as a polynomial in $\alpha$.
Using the fact that the intersection of finitely many generic sets is still generic, we can conclude that the
roots $\alpha_1,\dots,\alpha_m$ of $Q(\alpha)$ will generically be different from finitely many given values
$\gamma_1,\dots,\gamma_k$. (Using again a M\"obius transformation argument, one can show that
also the case is covered that one of the values equals $\infty$.)

To show that the roots $\alpha_1,\dots,\alpha_M$ are generically simple, let us denote $p(\alpha):=\det Q(\alpha)$.
It is well known that the roots of a polynomial $p$ are simple if and only if the determinant of the Sylvester matrix $S(p,p')$ associated
with $p$ and its derivative $p'$ is nonzero.
It is clear from the construction of the Sylvester matrix, that
$\det S(p,p')$ is a polynomial in the real and imaginary parts of the entries of $U$. We have to show that this polynomial is not
the zero polynomial and for this we have to find a particular choice for the entries of $U$ for which the corresponding
roots $\alpha_1,\dots,\alpha_M$ of $p(\alpha)$ are pairwise distinct. To this end, select
$u_1=e_{m_1+1}-\eps_1 e_1$, $u_2=e_{m_1+m_2+2}-\eps_2 e_{m_1+2}$, \dots,
$u_k=e_{m_1+\cdots+m_k+k}-\eps_ke_{m_1+\cdots+m_{k-1}+k}$,
with some parameters $\eps_1,\dots,\eps_k>0$. It follows that that $u_i^*q_j(\alpha)=\delta_{ij}\alpha^{m_j}-\eps_j$
and hence $Q(\alpha)$ is a diagonal matrix with determinant
\[
p(\alpha)=\prod_{j=1}^k \ (\alpha^{m_j}-\eps_j).
\]
The roots of each individual factor $\alpha^{m_j}-\eps_j$ are $m_j$ pairwise distinct complex numbers
lying on a circle centered at zero and with radius $\eps_j^{1/m_j}$. Clearly, we can choose the parameters
$\eps_1,\dots,\eps_k$ such that the $k$ radii $\eps_j^{1/m_j}$ are pairwise distinct.
Consequently, $\det S(p,p')$ is not the zero polynomial and hence generically with respect to the real and imaginary parts of the
entries of $U$ the roots $\alpha_1,\dots,\alpha_M$ are pairwise distinct.

It remains to show that the values $\alpha_1,\dots,\alpha_M$ are also different from each of their
conjugates $\overline{\alpha}_1,\dots,\overline{\alpha}_M$. This is exactly the case when the determinant of the Sylvester matrix
$S(p,\overline{p})$ is nonzero, where $\overline{p}$ is obtained from $p$ by viewing it as a polynomial in $\alpha$ and replacing
all its coefficients by their conjugates. Again, it is clear that the determinant of $S(p,\overline{p})$ is a polynomial in the
real and imaginary parts of the entries of $U$ and it remains to show that this polynomial is nonzero by finding one particular
example for which the values $\alpha_1,\dots,\alpha_M$ are different from each of their conjugates. To this end, we can use the
same example as in the previous argument now choosing $\eps_i$ to be complex with appropriate absolute values and arguments.

2) Since the values $\gamma_1,\dots,\gamma_k$ are different from the eigenvalues of $A-\lambda B$, we have $(A-\gamma_iB)\,s_i=0$
for some $s_i\ne 0$ if there exists constants $c_1,\dots,c_k$ not all  zero such that
\[
s_i=c_1\,q_1(\gamma_i)+\cdots+c_k\,q_k(\gamma_i)
\]
with $q_1,\dots,q_k$ as in 1). Then the equation $U^*s_i=t_i$ translates to
$Q(\gamma_i)\,c=t_i$ for $i=1,\dots,k$ with $Q$ and $c$ as in 1). Since $\gamma_i$ is different from the
values $\alpha_1,\dots,\alpha_M$, we have $\det Q(\gamma_i)\ne 0$ which means that the equation $Q(\gamma_i)\,c=t_i$
has a unique solution for $c$ for $i=1,\dots,k$.
\end{proof}

\begin{theorem}\label{thm:mainBor} Let $A-\lambda B$ be an $n\times n$ singular Hermitian pencil of normal rank $n-k$
with minimal indices $m_1,\dots,m_k$ and let $M:=m_1+\cdots+m_k$. Furthermore, let $D_A,D_B\in\mathbb C^{k,k}$ be Hermitian such that
$D_A-\lambda D_B$ is regular and such that all its eigenvalues
$\gamma_1,\dots,\gamma_k\in\mathbb C$ are semisimple and
different from the eigenvalues of $A-\lambda B$. Then there exists a generic set $\Omega\subseteq\mathbb C^{n,k}$ such that
for all $U\in\Omega$ the following statements hold:
\begin{enumerate}
\item The pencil~\eqref{pert1awithtau} has $2M$ simple eigenvalues $\alpha_1,\dots,\alpha_M,\overline{\alpha}_1,\dots,\overline{\alpha}_M$
which are independent of $\tau\ne 0$. For each eigenvalue $\alpha_i$ its corresponding right eigenvector $x_i$ is constant in $\tau\ne 0$
(up to scaling) and satisfies $U^* x_i=0$, while its corresponding left eigenvector $y_i$ is a linear function of $\tau$ (up to scaling) and
satisfies $U^*y_i\ne 0$ for all $\tau\ne 0$. Furthermore, $x_i$ is also a left eigenvector and $y_i$ a right eigenvector of the
pencil~\eqref{pert1awithtau} associated with $\overline{\alpha}_i$.
\item For each $\tau\ne 0$ each $\gamma_i$ is an eigenvalue of~\eqref{pert1awithtau} with the same algebraic
multiplicity as for the pencil $D_A-\lambda D_B$. Furthermore, the left and right null spaces
${\cal N}_l(\gamma_i)$ and ${\cal N}_r(\gamma_i)$ of~\eqref{pert1awithtau} associated with $\gamma_i$
are constant in $\tau$. In addition, we have ${\cal N}_r(\gamma_i)\cap \ker(U^*)=\{0\}$ and ${\cal N}_l(\gamma_i)\cap \ker(U^*)=\{0\}$, i.e., for each pair $(x,y)$ of right and left eigenvectors $x$ and $y$ of~\eqref{pert1awithtau}
associated with $\gamma_i$ we have $U^*x\ne 0$ and $U^*y\ne 0$.
\end{enumerate}
\end{theorem}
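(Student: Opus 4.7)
The plan is to combine Theorem~\ref{thm:nfp}, which puts $\widetilde A-\lambda\widetilde B$ in block diagonal form $P(\widetilde A-\lambda\widetilde B)P^*=\operatorname{diag}(R(\lambda),R_{\rm new}(\lambda))$ with $R_{\rm new}$ of size $2M+k$, with both parts of Lemma~\ref{lem:Lo}. The generic set $\Omega$ will be the intersection of the one from Theorem~\ref{thm:nfp} with the one from Lemma~\ref{lem:Lo} applied to the enlarged avoidance list $\{\gamma_1,\dots,\gamma_k,\overline\gamma_1,\dots,\overline\gamma_k\}$; this guarantees that the $M$ values $\alpha_1,\dots,\alpha_M$ produced by the lemma are automatically non-real, pairwise distinct, and distinct from every $\gamma_j$ and every $\overline\gamma_j$.

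For Part~1, Lemma~\ref{lem:Lo}(1) supplies nonzero vectors $z_i$ with $(A-\alpha_iB)z_i=0$ and $U^*z_i=0$; the perturbation then vanishes on $z_i$, so $x_i:=z_i$ is a right eigenvector of $\widetilde A-\lambda\widetilde B$ at $\alpha_i$, independent of $\tau$, with $U^*x_i=0$. Conjugating the identity $(\widetilde A-\alpha_i\widetilde B)z_i=0$ and using that $\widetilde A,\widetilde B$ are Hermitian (which requires $\tau\in\mathbb R$) gives $z_i^*(\widetilde A-\overline\alpha_i\widetilde B)=0$, so $x_i$ is also a left eigenvector at $\overline\alpha_i$; dually, any right eigenvector $y_i$ at $\overline\alpha_i$ is a left eigenvector at $\alpha_i$. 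If $U^*y_i=0$, then $(A-\overline\alpha_iB)y_i=0$ would force $\overline\alpha_i\in\{\alpha_1,\dots,\alpha_M\}$ by Lemma~\ref{lem:Lo}(1), contradicting $\{\alpha_i\}\cap\{\overline\alpha_j\}=\emptyset$; hence $U^*y_i\neq 0$. For linearity in $\tau$, I would set $t:=U^*y_i$ and rewrite the eigenvector equation as $(A-\overline\alpha_iB)y_i=-\tau\,U(D_A-\overline\alpha_iD_B)t$; the solvability condition pins $t$ down to a one-dimensional subspace independent of $\tau$ (via $\ker Q(\alpha_i)^*$, with $Q$ as in the proof of Lemma~\ref{lem:Lo}), and writing $y_i=\tau\hat y_p+\nu$ with $(A-\overline\alpha_iB)\hat y_p=-U(D_A-\overline\alpha_iD_B)t$ and $\nu\in\ker(A-\overline\alpha_iB)$ determined by $U^*\nu=t-\tau U^*\hat y_p$ (using invertibility of $Q(\overline\alpha_i)$, which holds since $\overline\alpha_i\notin\{\alpha_j\}$) exhibits $y_i$ as a degree-one polynomial in $\tau$.

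For Part~2, choose linearly independent eigenvectors $v_1,\dots,v_k$ of $D_A-\lambda D_B$ (possible because all $\gamma_i$ are semisimple) and apply Lemma~\ref{lem:Lo}(2) with $t_i=v_i$ to obtain $s_1,\dots,s_k$ satisfying $(A-\gamma_iB)s_i=0$ and $U^*s_i=v_i$; direct substitution yields $(\widetilde A-\gamma_i\widetilde B)s_i=\tau U(D_A-\gamma_iD_B)v_i=0$, so each $s_i$ is a $\tau$-independent right eigenvector at $\gamma_i$. Linear independence transfers from the $v_i$ to the $s_i$, so each distinct $\gamma$-value receives at least as many right eigenvectors as its multiplicity in $D_A-\lambda D_B$; dimension accounting against the $k$ slots of $R_{\rm new}$ left after Part~1 makes this an equality, forces all $\alpha_i,\overline\alpha_i$ to be simple, and rules out Jordan structure at each $\gamma_i$. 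The statement for $\mathcal N_l(\gamma_i)$ is obtained analogously using right eigenvectors at $\overline\gamma_i$. Finally, $\mathcal N_r(\gamma_i)\cap\ker(U^*)=\{0\}$ follows from the same Lemma~\ref{lem:Lo}(1) contradiction: a nonzero vector in the intersection would give $(A-\gamma_iB)s=0$ with $U^*s=0$, forcing $\gamma_i\in\{\alpha_1,\dots,\alpha_M\}$, which is precluded by the enlarged generic set; the left-sided analogue uses $\overline\gamma_i$.

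The main obstacle I foresee is the linearity-in-$\tau$ claim for $y_i$: abstract analytic perturbation theory only yields local analyticity, whereas the theorem asks for a \emph{global polynomial} of degree exactly one. This forces the explicit particular-solution-plus-kernel-correction construction above, and the correct bookkeeping of the two sources of $\tau$-dependence (the particular solution $\tau\hat y_p$ and the kernel correction $\nu$ needed to enforce $U^*y_i=t$) rests crucially on the invertibility of the matrix $Q(\overline\alpha_i)$ that was established in the proof of Lemma~\ref{lem:Lo}.
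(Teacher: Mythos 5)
Your proposal is correct and, apart from one central step, follows the same route as the paper: apply Lemma~\ref{lem:Lo}(1) to obtain the $\alpha_i$ together with right null vectors $z_i$ of both $A-\alpha_i B$ and $U^*$, observe that these are $\tau$-independent right eigenvectors of $\widetilde A-\lambda\widetilde B$, use the Hermitian structure to get $z_i$ as a left eigenvector at $\overline\alpha_i$, and invoke Lemma~\ref{lem:Lo}(2) for the prescribed eigenvalues. The place where you genuinely diverge is the left eigenvector at $\alpha_i$ and its affine dependence on $\tau$. The paper treats $\tau$ as the pencil variable and analyzes the Kronecker structure of the singular pencil $G_i+\tau H_i := (A-\alpha_i B)+\tau\,U(D_A-\alpha_i D_B)U^*$: a rank count shows it has exactly one right minimal index (equal to $0$, coming from $z_i$) and exactly one left minimal index (equal to $1$), and the corresponding left singular chain $(w_i,\zeta_i)$ with $w_i^*U\neq 0$ gives $y_i(\tau)=w_i+\tau\zeta_i$. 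You instead work with the equation $(\widetilde A-\overline\alpha_i\widetilde B)y_i=0$ directly: the Fredholm-type solvability condition $Q(\alpha_i)^*(D_A-\overline\alpha_i D_B)t=0$ forces $t=U^*y_i$ into the one-dimensional space $\ker Q(\alpha_i)^*$ (using that $\alpha_i$ is a simple root of $\det Q$, so $\operatorname{rank}Q(\alpha_i)=k-1$), and then a fixed particular solution plus a kernel correction--solvable because $Q(\overline\alpha_i)$ is invertible as $\overline\alpha_i\notin\{\alpha_j\}$--exhibits $y_i$ as an affine function of $\tau$. Both arguments are valid; the paper's is more structural (and simultaneously delivers geometric simplicity and $U^*y_i\neq 0$ in one stroke), while yours is more elementary and makes the $\tau$-dependence fully explicit. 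Your dimension-count at the end of Part~2 (at least $2M$ simple random eigenvalues plus $k$ prescribed ones in a block of size $2M+k$ forces equality everywhere) is exactly the right way to close the loop on algebraic simplicity and on the multiplicities at the $\gamma_i$; the paper is terser on this point.

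Two small remarks. First, the ``enlarged avoidance list'' $\{\gamma_j,\overline\gamma_j\}$ is harmless but redundant: the $\gamma_j$ are the spectrum of the Hermitian pencil $D_A-\lambda D_B$, which is closed under complex conjugation, so avoiding the $\gamma_j$ already avoids their conjugates; the non-reality of the $\alpha_i$ comes solely from $\{\alpha_i\}\cap\{\overline\alpha_j\}=\emptyset$. Second, to make the degree of $y_i(\tau)$ exactly one (not merely at most one), note that a $\tau$-independent left eigenvector would give $(A-\overline\alpha_i B)y_i=0$ and $U(D_A-\overline\alpha_i D_B)U^*y_i=0$ simultaneously, forcing $U^*y_i=0$ and contradicting the $U^*y_i\neq 0$ you have already established.
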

\begin{remark}\rm
The simplicity of the eigenvalues $\alpha_1,\dots,\alpha_M,\overline{\alpha}_1,\dots,\overline{\alpha}_M$ implies
that they are all nonreal, different from the eigenvalues of $A-\lambda B$, and different from the values $\gamma_1,\dots,\gamma_k$.
\end{remark}
\begin{proof}
Applying a M\"obius transformation simultaneously to both $A-\lambda B$ and $D_A-\lambda D_B$ otherwise, we may assume that $\infty$ is not an eigenvalue of $D_A-\lambda D_B$.

Observe that generically with respect to the real and imaginary parts of the entries of $U$, the statements of Lemma~\ref{lem:Lo}
hold, if we take the eigenvalues of the pencil $D_A-\lambda D_B$
for the values $\gamma_1,\dots,\gamma_k$ and a corresponding linearely independent set of eigenvectors $t_1,\dots,t_k$. We now show 1)--3).

1) Applying Lemma~\ref{lem:Lo}, generically with respect to the real and imaginary parts of the entries of $U$ there exist
exactly $M$ pairwise distinct values $\alpha_1,\dots,\alpha_M$ different from the eigenvalues of $A-\lambda B$
and from $\gamma_1,\dots,\gamma_k$ satisfying $\{\alpha_1,\dots,\alpha_M\}\cap\{\overline{\alpha}_1,\dots,\overline{\alpha}_M\}=\emptyset$,
as well as corresponding vectors $z_1,\dots,z_M\ne 0$ satisfying $(A-\alpha_i B)\,z_i=0$ and $U^*z_i=0$ for $i=1,\dots,M$.
It follows that
$$(A-\alpha_i B + \tau \, UD_AU^* -\alpha_i \, \tau \, UD_BU^*)\,z_i=0$$ which implies that
$\alpha_i$ is an eigenvalue of \eqref{pert1awithtau} with right eigenvector $z_i$ for all $\tau\ne 0$.

Now let us interpret $\tau$ as a variable. Then the (non-Hermitian) pencil
\begin{equation}\label{eq:pengh}
G_i+\tau H_i:=(A-\alpha_i B) +\tau \, U(D_A-\alpha_iD_B)\,U^*
\end{equation}
is a singular pencil since $\alpha_i$ is an eigenvalue of~\eqref{pert1awithtau} for all $\tau\in\mathbb C$.
Let us assume that the pencil has $j$ right and $j$ left minimal indices. (Since the pencil is square, the numbers of left and right
minimal indices must be equal.) Since $G_iz_i=H_iz_i=0$, we know that one of the right minimal indices is zero. Since $\alpha_i$ is a
simple eigenvalue of~\eqref{pert1awithtau}, it follows that all other $j-1$ minimal indices are larger than zero (if there are any).
On the other hand all left minimal indices of the pencil~\eqref{eq:pengh} are larger than zero, because otherwise there would exists
a nonzero vector $w_i$ satisfying $w_i^*G_i=0$ and $w_i^*H_i=0$. Then $w_i^*G_i=0$ implies
$w_i^*(A-\alpha_i B)=0$ or, equivalently, $(A-\overline{\alpha}_iB)\,w_i=0$, because $A$ and $B$ are Hermitian.
On the other hand, $w_i^*H_i=0$ implies $w_i^*U=0$,
because $U^*$ generically has full row rank and $D_A-\alpha_i D_B$ is nonsingular since
$\alpha_i$ is different from the eigenvalues of $D_A-\lambda D_B$. But then by Lemma~\ref{lem:Lo} $\overline{\alpha}_i$
is equal to one of the values $\alpha_1,\dots,\alpha_M$ which is a contradiction. Thus, all left minimal
indices of~\eqref{eq:pengh} are at least one.

Moreover, we have $\text{rank}(G_i)=n-k$, because $\alpha_i$ differs from all finite eigenvalues of $A-\lambda B$, and
we have $\text{rank}(H_i)=k$, because $\alpha_i$ differs from the eigenvalues of $D_A-\lambda D_B$.
Thus, the Kronecker canonical form of~\eqref{eq:pengh} contains at least $n-k-j$ blocks associated with the eigenvalue infinity
and at least $k-j$ blocks associated with the eigenvalue zero. Taking into account
that the pencil has $j$ left and $j$ right minimal indices, where we know that one
right minimal index is zero and the others are at least one, it follows that the
Kronecker canonical form of~\eqref{eq:pengh} has at least
$n-k-j+k-j+j\cdot 1+1\cdot 1+(j-1)\cdot 2=n+j-1$ columns
which implies that we must have $j=1$, i.e., the pencil~\eqref{eq:pengh}
has precisely one right minimal index (which we know is zero) and consequently exactly one left minimal index, say $\eta$.
Another standard calculation shows that the Kronecker canonical form of~\eqref{eq:pengh} has at least $n+\eta-1$ rows which implies $\eta=1$.
But then, there exist two linearly independent vectors $w_i$ and $z_i$ satisfying
\begin{align*}
w_i^*(A-\alpha_iB)&=0,\\
z_i^*(A-\alpha_iB)+w_i^*U(D_A-\alpha_i D_B)\,U^*&=0,\\
z_i^*U(D_A-\alpha_i D_B)\,U^*&=0,
\end{align*}
where, in particular, $w_i^*U\ne 0$. It follows that a left eigenvector $y_i$ of the pencil~\eqref{pert1awithtau}
associated with $\alpha_i$ then has (up to scaling) the form $y_i(\tau)=w_i+\tau z_i$. This finishes the proof of 1).

2) As mentioned in the beginning of the proof, let $t_1,\dots,t_k$ form a set of
linearly independent eigenvectors of the pencil $D_A-\lambda D_B$
associated with the eigenvalues $\gamma_1,\dots,\gamma_k$. By Lemma~\ref{lem:Lo}, there exist
$k$ (necessarily linearly independent) vectors $s_1,\dots,s_k\in\mathbb C^n$ such that
$(A-\gamma_i B)\,s_i=0$ and $t_i=U^*s_i$ for $i=1,\dots,k$. Then we have
\[
(\widetilde A-\gamma_i\widetilde B)\,s_i=(A-\gamma_i B)\,s_i+\tau \, U(D_A-\gamma_i D_B)\,U^*s_i=0
\]
for each $\tau\ne 0$ for $i=1,\dots,k$.
This implies that the values $\gamma_1,\dots,\gamma_k$ are eigenvalues of $\widetilde A-\lambda\widetilde B$
with the same algebraic multiplicities as for $D_A-\lambda D_B$. Furthermore, it follows that the null space
${\cal N}_r(\gamma_i)$ does not depend on $\tau$ and by construction we have
${\cal N}_r(\gamma_i)\cap \ker(U^*)=\{0\}$.

By applying Lemma~\ref{lem:Lo} to the pencil $A^*-\lambda B^*$ we obtain the analogous statements for
the left null spaces ${\cal N}_l(\gamma_i)$.
\end{proof}

\begin{summary} \rm\label{rm:3groups}
We summarize the results from Theorem~\ref{thm:nfp} and Theorem~\ref{thm:mainBor}: let
$A-\lambda B$ be an $n\times n$ singular Hermitian pencil of normal rank $n-k$
with left minimal indices $m_1,\dots,m_k$ and let $D_A$, $D_B$, and $M$ be as in Theorem~\ref{thm:mainBor}.
Select a random $U\in\mathbb C^{n,k}$. Then generically $U$ has full rank and satisfies the hypotheses of Theorem~\ref{thm:nfp}.
The $n$ eigenvalues of the perturbed pencil
\[
\widetilde A-\lambda\widetilde B:=A-\lambda B+\tau \, (UD_AU^*-\lambda \, UD_BU^*)
\]
can then be grouped into following three classes of eigenvalues:
\begin{enumerate}
\item \emph{True eigenvalues}: There are $r:=n-2M-k$ eigenvalues that are precisely the eigenvalues
of the original singular pencil $A-\lambda B$. If $\mu$ is such an eigenvalue with
corresponding right and left eigenvectors $x$ and $y$ then $U^*x=0$ and $U^*y=0$.
\item \emph{Prescribed eigenvalues}: There are $k$ semisimple eigenvalues that coincide with the $k$ semisimple eigenvalues
of $D_A-\lambda D_B$. If $\mu$ is such an eigenvalue with corresponding right and left eigenvectors $x$ and $y$ then
$U^*x\ne 0$ and $U^*y\ne 0$.
\item \emph{Random eigenvalues}: These are the remaining $2M$ eigenvalues. They are all nonreal and simple and if $\mu$ is
such an eigenvalue with the corresponding right and left $x$ and $y$,
then we either have $U^*x=0$ and $U^*y\ne 0$, or $U^*x\ne 0$ and $U^*y=0$. In that case, also $\overline{\mu}$ is an
eigenvalue and $x$ and $y$ are corresponding left and right eigenvectors, respectively.
\end{enumerate}
\end{summary}
\begin{remark}\rm
We highlight that the fact that all random eigenvalues of the perturbed Hermitian pencil
are nonreal is highly nontrivial. Indeed, it is shown in \cite{DTDD24} that the
generic eigenstructure of a regular Hermitian pencil may consist of real and nonreal
eigenvalues. Numerical experiments suggest that for random regular Hermitian pencils
the probability of having at least one real eigenvalue tends to $1$ when the size tends to
infinity (for odd size this is obvious, but it also seems to be the case for even size).
\end{remark}

\begin{remark}\label{rm:orthochoice}\rm
Instead of random matrices $U\in\mathbb C^{n,k}$, we will use matrices with
orthonormal columns in practice. At first glance one may think that this leads to a
different notion of genericity such that the results from Theorem~\ref{thm:nfp} and
Theorem~\ref{thm:mainBor} can no longer be applied, but in fact this is not an
issue. The key observation that generically a random matrix $U\in\mathbb C^{n,k}$
will be of full rank $k$. Then computing a thin QR decomposition $U=QR$, we find that
\[
U\,(D_A-\lambda D_B)\,U^*=QR\,(D_A-\lambda D_B)\,R^*Q^*,
\]
so the perturbation with $U$ and the prescribed pencil $D_A-\lambda D_B$ is exactly
the same as the perturbation with $Q$ and the prescribed pencil $R(D_A-\lambda D_B)R^*$ which is congruent to $D_A-\lambda D_B$ and hence has the same
prescribed eigenvalues. (This information on the spectrum was the only information on
$D_A-\lambda D_B$ that was used in the construction of the generic sets in the proofs
of Theorem~\ref{thm:nfp} and Theorem~\ref{thm:mainBor}.) Consequently, it is enough
to know that a matrix $U\in\mathbb C^{n,k}$ with orthonormal columns is the
isometric factor of a QR decomposition of a matrix in the generic sets from
Theorem~\ref{thm:nfp} and Theorem~\ref{thm:mainBor} to be able to apply both results.
Such a matrix can be obtained via the QR decomposition of a random
complex $n\times k$ matrix.
\end{remark}

\begin{remark}\label{rm:definite}\rm
If one of the matrices $A$ and $B$ is positive semidefinite and hence all eigenvalues
of the pencil are real and semisimple, then this property can be preserved by choosing
$D_A$ or $D_B$ positive definite, respectively, and $\tau>0$. (Similarly, the case
that a linear combination of $A$ and $B$ is positive semidefinite can be treated.)
Consequently,
the perturbed regular pencil $\widetilde A-\lambda\widetilde B$ will only have real
semisimple eigenvalues. Observe that this does not contradict Theorem~\ref{thm:mainBor}
or Summary~\ref{rm:3groups}, because in this case all minimal indices of the original
pencil $A-\lambda B$ are zero and hence the perturbed pencil only has true and
prescribed, but no random eigenvalues.
\end{remark}

\begin{remark}\label{rem:sign} \rm
It follows from~\eqref{eq:nfp} that the regular part of the original pencil
$A-\lambda B$ is contained in the perturbed pencil $\widetilde A-\lambda\widetilde B$.
As a consequence, the true eigenvalues of the perturbed pencil do not
only have the same multiplicities as the eigenvalues of the original pencil, but
also the sign characteristic is preserved. This observation can be used to calculate
the sign characteristic of true eigenvalues even in the case that the eigenvalues
and eigenvectors of the perturbed Hermitian pencil are computed by a method that does
not make use of the Hermitian structure of the problem. To this end, it only matters
that the perturbed regular Hermitian pencil was obtained using a structure-preserving
procedure.

For example, assume that $\lambda_0\in\mathbb R$ is a semisimple real eigenvalue
and that the eigenvectors $x_1,\dots,x_g$ form a basis for the corresponding
deflating subspace. Then setting $X := [\,x_1 \ \ \dots \ \ x_g\,]$, it remains to compute the inertia of the Hermitian
matrix $X^*\widetilde BX$, i.e., the numbers $\pi$ and $\nu$ of positive and negative
eigenvalues, respectively. It then follows immediately by inspection from the
Thompson canonical form, that the sign characteristic of the eigenvalue $\lambda_0$
contains $\pi$ times the sign $+1$ and $\nu$ times the sign $-1$. Similarly, the
sign characteristic of the eigenvalue $\infty$ can be calculated using the matrix
$\widetilde A$ instead of $\widetilde B$.

If $\lambda_0$ is not semisimple, then the calculation of the sign characteristic
is more complicated as principal vectors of higher order from Jordan chains are
needed, but again the information on the sign characteristic is preserved in the
transition from the singular Hermitian pencil $A-\lambda B$ to the perturbed
regular Hermitian pencil $\widetilde A-\lambda\widetilde B$.
\end{remark}

\section{Structure preserving projections of singular pencils}\label{sec:proj}
In this section, we will show that instead of solving the singular generalized
Hermitian eigenvalue problem by applying the rank-updating
perturbation~\eqref{pert1awithtau},
we can alternatively compute the eigenvalues and eigenvectors of a projected pencil
$(n-k)\times (n-k)$ Hermitian pencil $W^*(A-\lambda B)\,W$, where
$W\in\mathbb C^{n,n-k}$ is a random complex
matrix. (Using an argument similar to the one in Remark~\ref{rm:orthochoice},
the matrix $W$ can be chosen to have orthonormal columns, i.e., $W^*W=I_{n-k}$.)
This is a generalization
of the theory from \cite{HMP_SingGep2} to the structured problem with Hermitian matrices.
Before we state the main theorem of the section, we repeat the following definition and result from \cite{HMP_SingGep2}.

\begin{definition}[{\cite[Def.~9.1]{HMP_SingGep2}}]
Let $\Omega\subseteq\C^{n,k}$. Then the \emph{pointwise orthogonal complement}
$\Omega^\perp$ of $\Omega$ in $\C^{n,n}$ is the set
$\Omega^\perp:=\{W\in \C^{n,n-k}\ |\ \exists \, U\in\Omega, \, W^*U=0\}$.
\end{definition}
\begin{proposition}[{\cite[Prop.~9.3]{HMP_SingGep2}}]\label{prop:perpgen}
Let $\Omega\subseteq\C^{n,k}$ be a generic set. Then also $\Omega^\perp\subseteq\C^{n,n-k}$
is a generic set.
\end{proposition}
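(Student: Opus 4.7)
The plan is to produce, from a single nonzero polynomial $p$ in the $2nk$ real variables (the real and imaginary parts of the entries of $U\in\C^{n,k}$) whose zero set contains $\Omega^c$, a nonzero polynomial $q$ in the $2n(n-k)$ real variables attached to $W\in\C^{n,n-k}$ whose nonvanishing forces $W\in\Omega^\perp$. To do this I would build a rational ``witness map'' $\Phi:\C^{n,n-k}\to\C^{n,k}$ (defined off a proper algebraic set) satisfying $W^*\Phi(W)=0$ identically, together with a specific base point $W_0$ at which $\Phi(W_0)$ is already known to avoid the zero set of $p$; then $q(W):=\det(W^*W)^D\,p(\Phi(W))$, with denominators cleared, will have the required property.

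Concretely, since $p$ is nonzero its nonvanishing locus is dense, so I may pick $U_0\in\C^{n,k}$ with $p(U_0)\neq 0$ and full column rank $k$ (in particular $U_0\in\Omega$), and take $W_0\in\C^{n,n-k}$ of full column rank whose columns span $\operatorname{col}(U_0)^\perp$, so that $W_0^*U_0=0$. I then define
\[
\Phi(W):=U_0-W(W^*W)^{-1}W^*U_0,
\]
the orthogonal projection of $U_0$ onto $\ker(W^*)$. By construction $W^*\Phi(W)=0$ whenever $W$ has full column rank, and $\Phi(W_0)=U_0$. Writing $(W^*W)^{-1}=\det(W^*W)^{-1}\operatorname{adj}(W^*W)$ via Cramer's rule and letting $D$ be the total degree of $p$, the product $q(W):=\det(W^*W)^D\,p(\Phi(W))$ becomes a polynomial in the real and imaginary parts of $W$.

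With these definitions the argument essentially writes itself. At $W=W_0$ we have $\det(W_0^*W_0)>0$ and $\Phi(W_0)=U_0$, so $q(W_0)=\det(W_0^*W_0)^D\,p(U_0)\neq 0$; hence $q\not\equiv 0$. Conversely, whenever $q(W)\neq 0$ the factor $\det(W^*W)$ is nonzero, so $W$ has full column rank, $\Phi(W)$ is well defined and satisfies $W^*\Phi(W)=0$, while $p(\Phi(W))\neq 0$ forces $\Phi(W)\in\Omega$, exhibiting a witness showing $W\in\Omega^\perp$. Therefore $(\Omega^\perp)^c\subseteq\{q=0\}$, a proper algebraic subset, and $\Omega^\perp$ is generic.

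The only real obstacle is checking that $q$ is indeed polynomial in the correct set of variables: over $\C$ neither $W^*=\bar W^{\,\top}$ nor $(W^*W)^{-1}$ is a rational function of the entries of $W$. The resolution is the convention built into the definition of genericity used in the paper: writing $W=W_R+iW_I$ makes $W^*$ an $\R$-linear function of $(W_R,W_I)$ and $\det(W^*W)\in\R$ an honest real polynomial in these $2n(n-k)$ real coordinates, so each entry of $\Phi(W)$ is a real-rational function of $(W_R,W_I)$ with common denominator $\det(W^*W)$, and clearing denominators of degree $D$ produces the polynomial $q$ as claimed.
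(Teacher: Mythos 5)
Your proof is correct. A small housekeeping point first: the paper at hand does not prove this proposition---it simply cites \cite[Prop.~9.3]{HMP_SingGep2}---so there is no in-text argument to compare against, and I am evaluating your argument on its own merits.

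Your construction is sound and, as far as I can tell, captures the natural route: reduce genericity of $\Omega$ to a single nonvanishing real polynomial $p$, build the $\R$-rational witness map $\Phi(W)=U_0-W(W^*W)^{-1}W^*U_0$ whose image lands tautologically in $\ker(W^*)$, and pull $p$ back to a polynomial $q(W)=\det(W^*W)^D\,p(\Phi(W))$ after clearing denominators. Each of the three steps you need is present and correct: (i) the denominator $\det(W^*W)$ is an honest real polynomial in $(W_R,W_I)$ since $W^*W$ is Hermitian, and every real/imaginary coordinate of $\Phi(W)$ is $\R$-rational with this common denominator, so $\det(W^*W)^D p(\Phi(W))$ is indeed a polynomial when $D$ bounds the degree of $p$; (ii) the anchor point $W_0$ (chosen orthogonal to a full-rank $U_0$ with $p(U_0)\neq0$) yields $\Phi(W_0)=U_0$ and hence $q(W_0)\neq 0$, ruling out $q\equiv 0$; (iii) $q(W)\neq 0$ forces $\det(W^*W)\neq0$, $W^*\Phi(W)=0$, and $\Phi(W)\in\{p\neq0\}\subseteq\Omega$, so $W\in\Omega^\perp$. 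One thing you rightly did not need: the definition of $\Omega^\perp$ does not require the witness $U$ to have full rank, so the fact that $\Phi(W)$ could in principle drop rank on some nowhere-dense set causes no trouble---it only matters that $\Phi(W)\in\Omega$. The only cosmetic remark is that the phrase ``$q$ is a polynomial in the correct set of variables'' deserves the explicit one-line justification you give at the end; without the identification $\C\cong\R^2$ built into the paper's notion of genericity, $W\mapsto W^*$ is not polynomial, and a reader who skips your last paragraph might stumble there.
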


The following result is a structure-preserving extension to \cite[Thm.~4.2]{HMP_SingGep2}.
\begin{theorem}\label{main_projection}
Let $A-\lambda B$ be a singular $n\times n$ Hermitian matrix pencil having normal rank $n-k$.
Then there exists a generic set $\Omega\subseteq\C^{n,n-k}$ with the property that for each $W\in\Omega$ the following statements hold:
\begin{enumerate}
\item The $(n-k)\times(n-k)$ pencil $W^*(A-\lambda B)\,W$ is regular and such that
 its Kronecker canonical form is
\[
\left[\begin{array}{cc}R(\lambda)&0\\ 0&R_{\rm ran}(\lambda)\end{array}\right],
\]
where $R(\lambda)$ coincides with the regular part of $A-\lambda B$ and where all eigenvalues of $R_{\rm ran}(\lambda)$ are simple
and distinct from the eigenvalues of $R(\lambda)$.
\item Let the columns of $W_\perp\in \C^{n,k}$ form a
basis of the orthogonal complement of the range of $W$ and let
$\lambda_0$ be an eigenvalue of $W^*(A-\lambda B)\,W$ with corresponding left eigenvector $y$ and right eigenvector $x$.
If $\lambda_0\in \C$, then $\lambda_0$ is an eigenvalue of $A-\lambda B$, i.e., of $R(\lambda)$, if and only if
$y^*W^*(A-\lambda_0 B)\,W_\perp=0$ and $W_\perp^*(A-\lambda_0 B)\,Wx= 0$. If $\lambda_0=\infty$ then $\lambda_0$ is an eigenvalue of
$A-\lambda B$, i.e., of $R(\lambda)$, if and only if
$y^*W^*BW_\perp=0$ and $W_\perp^*BWx= 0$.
\end{enumerate}
\end{theorem}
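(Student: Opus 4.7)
The approach is to reduce the projection setting to the rank-completing perturbation setting of Theorem~\ref{thm:nfp} by means of the orthogonality relation $W^*U = 0$. Concretely, let $\Omega_U \subseteq \C^{n,k}$ denote the generic set of admissible $U$ produced by Theorem~\ref{thm:nfp}. By Proposition~\ref{prop:perpgen} its pointwise orthogonal complement $\Omega_1 := \Omega_U^\perp$ is a generic subset of $\C^{n,n-k}$, and for each $W \in \Omega_1$ we may fix a witness $U \in \Omega_U$ with $W^*U = 0$. This $U$ plays no role in the algorithm but organizes the proof. The final generic set will be $\Omega := \Omega_1 \cap \Omega_2$, where $\Omega_2$ is a second generic set controlling the random eigenvalues of the projection, to be produced below.

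For part~1, fix any Hermitian regular $k \times k$ pencil $D_A - \lambda D_B$ whose spectrum is disjoint from that of $A - \lambda B$ and any $\tau \in \R\setminus\{0\}$. Set $\widetilde A - \lambda \widetilde B := A - \lambda B + \tau(UD_AU^* - \lambda UD_BU^*)$, which is regular by Theorem~\ref{thm:nfp}. Since $W^*U = 0$, the rank-$k$ correction is annihilated by the projection, giving the key identity
\[
W^*(\widetilde A - \lambda \widetilde B)\,W \;=\; W^*(A - \lambda B)\,W.
\]
Thus the projection of the singular pencil coincides with the projection of a regular one. To embed $R(\lambda)$ as a block of the Kronecker form of the projected pencil, I invoke part~2 of Theorem~\ref{thm:nfp}: for each true eigenvalue $\lambda_0$, every left and right Jordan chain $(x_1,\dots,x_\ell)$ of $\widetilde A - \lambda \widetilde B$ satisfies $U^*x_i = 0$, so $x_i \in \ker(U^*) = \mathrm{range}(W)$. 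Writing $x_i = W z_i$ uniquely and applying $W^*U = 0$ once more to the chain equations, one verifies that $(z_1,\dots,z_\ell)$ is a linearly independent Jordan chain of $W^*(A-\lambda B)W$ at $\lambda_0$. Hence every Jordan block of $R(\lambda)$ appears in the projected pencil. To handle the remaining $2M$ dimensions, I specialize $W = P^* \left[\begin{smallmatrix} I_r & 0 \\ 0 & V \end{smallmatrix}\right]$ with $P$ from Proposition~\ref{prop:genpert} and $V \in \C^{(2M+k)\times 2M}$ any basis of $\ker \widetilde U^*$; this gives $W^*(A-\lambda B)W = \operatorname{diag}(R(\lambda), V^*S(\lambda)V)$. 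A Sylvester-resultant argument in the spirit of Lemma~\ref{lem:Lo}, applied within the restricted family $V^*\widetilde U = 0$, will produce an explicit $V$ for which $V^*S(\lambda)V$ is regular with $2M$ pairwise distinct eigenvalues, all disjoint from those of $R(\lambda)$. Invoking \cite[Lemma~2.1]{MehMRR14} promotes this witness to the desired generic set $\Omega_2$, and a dimension count ($n-k = r + 2M$) rules out further spurious structure.

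For part~2, I work in the basis $[W, W_\perp]$ of $\C^n$. Let $x$ be a right eigenvector of the projected pencil at a finite $\lambda_0$ and set $v := Wx$. The equation $W^*(A-\lambda_0 B)Wx = 0$ places $(A-\lambda_0 B)v$ in $\ker W^* = \mathrm{range}(W_\perp)$, while the hypothesis $W_\perp^*(A-\lambda_0 B)Wx = 0$ places it simultaneously in $\ker W_\perp^* = \mathrm{range}(W)$. Since $\mathrm{range}(W)$ and $\mathrm{range}(W_\perp)$ are complementary and $W_\perp$ has full column rank, this forces $(A-\lambda_0 B)v = 0$, so $\lambda_0$ is an eigenvalue of $A - \lambda B$ and therefore of $R(\lambda)$. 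Conversely, if $\lambda_0$ is a true eigenvalue, part~2 of Theorem~\ref{thm:nfp} supplies a right eigenvector $v$ of $\widetilde A - \lambda \widetilde B$ with $U^*v = 0$; annihilation of the perturbation on $\ker(U^*)$ shows that $v$ is also a right eigenvector of $A-\lambda B$, and both projection conditions follow from $v \in \mathrm{range}(W)$. The left-eigenvector statement is dual, using $A^* = A$ and $B^* = B$. The case $\lambda_0 = \infty$ reduces to the finite case by the same M\"obius transformation argument as in the proof of Proposition~\ref{prop:genpert}.

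The main obstacle is the generic simplicity of the random eigenvalues and their disjointness from those of $R(\lambda)$ in part~1. Because the constraint $V^*\widetilde U = 0$ confines $V$ to a $2M$-dimensional subspace of the $(2M+k)$-dimensional singular space of $S(\lambda)$, the Sylvester-resultant polynomials governing coincidences among the roots of $\det(V^*S(\lambda)V)$ and their relation to the spectrum of $R(\lambda)$ must be shown nonzero \emph{inside} this restricted family. I expect the ``perturbed circles of distinct radii'' construction from the proof of Lemma~\ref{lem:Lo} to adapt, but this is the most delicate step and is precisely where the explicit block structure in~\eqref{eq:singpart} becomes indispensable.
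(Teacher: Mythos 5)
Your overall framework is the same as the paper's: fix $W$ in the pointwise orthogonal complement of the generic set from Theorem~\ref{thm:nfp}, pick a witness $U$ with $W^*U=0$, and observe that the projection kills the rank-$k$ correction so that $W^*(\widetilde A-\lambda\widetilde B)\,W = W^*(A-\lambda B)\,W$. However, there are two genuine gaps.

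\textbf{Part 1.} You acknowledge that you do not actually prove that the $2M$ remaining eigenvalues of the projected pencil are simple and disjoint from those of $R(\lambda)$; you propose a new Sylvester-resultant argument inside the constrained family $V^*\widetilde U=0$ and leave it as the ``most delicate step.'' This is precisely where the paper's proof is more economical: no new polynomial machinery is needed. The paper first establishes (by a separate, easy generic argument) that $W^*(A-\lambda B)W$ is regular, then shows via a congruence that it is equivalent to $\operatorname{diag}(R(\lambda),S_{11}(\lambda))$ with $S_{11}$ of size $2M$, and finally shows that every eigenvalue of $R_{\rm ran}(\lambda)$ from Theorem~\ref{thm:mainBor} is also an eigenvalue of $S_{11}(\lambda)$: if $\lambda_0$ is random with right eigenvector $z$, left eigenvector $v$ of $\widetilde A-\lambda\widetilde B$, then exactly one of $U^*z=0$, $U^*v=0$ holds, so exactly one of $z,v$ lies in $\operatorname{range}(W)$, which gives $\lambda_0$ an eigenvalue of $W^*(A-\lambda B)W$. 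Since $R_{\rm ran}$ has $2M$ simple eigenvalues and $S_{11}$ is regular of size $2M$, they coincide. Your proposed route would likely work but entails substantial extra effort that the transfer argument avoids.

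\textbf{Part 2.} Your forward direction has a non sequitur: from $(A-\lambda_0 B)\,Wx=0$ with $Wx\ne 0$ you conclude ``$\lambda_0$ is an eigenvalue of $A-\lambda B$ and therefore of $R(\lambda)$.'' But for a singular pencil, $\ker(A-\lambda_0 B)$ is nonzero for \emph{every} $\lambda_0$, so this alone does not place $\lambda_0$ among the eigenvalues of the regular part. Indeed, if $\lambda_0=\alpha_i$ is one of the random eigenvalues, Lemma~\ref{lem:Lo} supplies $z_i\ne 0$ with $(A-\alpha_i B)z_i=0$ and $U^*z_i=0$, i.e., $z_i\in\operatorname{range}(W)$. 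To rule out the random case you must combine the right-eigenvector condition with the left one (giving $(A-\overline{\lambda_0}B)Wy=0$) and then invoke the disjointness $\{\alpha_1,\dots,\alpha_M\}\cap\{\overline\alpha_1,\dots,\overline\alpha_M\}=\emptyset$ from Lemma~\ref{lem:Lo}, or equivalently the dichotomy of Theorem~\ref{thm:mainBor} that exactly one of $U^*z$, $U^*v$ vanishes for a random eigenvalue. Stating ``the left-eigenvector statement is dual'' does not discharge this; the two conditions must be used jointly.
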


\begin{proof}
We start by showing that there exists a generic set $\Omega_r\subseteq\C^{n,n-k}$ such that $W^*(A-\lambda B)\,W$ is a regular pencil for each $W\in\Omega_r$.
To see this, observe that the determinant of $W^*(A-\lambda B)\,W$ is a polynomial
in the real and imaginary parts of the entries of $W$.
Let us show that this polynomial is not identically zero.
Since $A-\lambda B$ has normal rank $n-k$, there exists $\lambda_0\in\mathbb R$
such that $\rank(A-\lambda_0 B)=n-k$. Since
$A-\lambda_0 B$ is Hermitian, there exist a unitary matrix $Q=[Q_1\ Q_2]$,
where $Q_1\in\mathbb C^{n,n-k}$, and a diagonal matrix
$D=\diag(D_1,0)$, where $D_1$ is a $(n-k)\times (n-k)$ nonsingular diagonal
matrix, such that $Q^*(A-\lambda_0B)Q=D$. If we take $W=Q_1$, then
$\det(W^*(A-\lambda_0 B)\,W)\ne 0$ and $W^*(A-\lambda B)\,W$ is a regular Hermitian pencil. As a consequence, the set of all $W\in\mathbb C^{n,n-k}$ for which
$W^*(A-\lambda B)W$ is regular is a generic set.

Next, let $D_A,D_B\in\mathbb C^{k,k}$ be Hermitian such that the pencil
$D_A-\lambda D_B$ is regular with semisimple eigenvalues that are distinct from the
eigenvalues of $A-\lambda B$. Set
$\Omega:=(\Omega_1\cap\Omega_2\cap\Omega_4)^\perp\cap\Omega_r\subseteq\C^{n,n-k}$, where $\Omega_1$, $\Omega_2$ and $\Omega_4$ are the
generic subsets of $\C^{n,k}$ from Proposition~\ref{prop:genpert},
Theorem~\ref{thm:nfp} and Theorem~\ref{thm:mainBor}, respectively,
applied to $A-\lambda B$. Then $\Omega$ is generic. For a fixed $W\in\Omega$ there
exists a matrix $U\in\Omega_1\cap\Omega_2\cap\Omega_4$ such that $W^*U=0$.
Since $U\in\Omega_1$, if follows from Proposition \ref{prop:genpert} than there exists a nonsingular matrix $P\in \C^{n,n}$ such that
\begin{equation}\label{eq:7.8.19}
P\,(A-\lambda B)\,P^*=\left[\begin{array}{cc}R(\lambda)&0\\ 0& S(\lambda)\end{array}\right]\quad\mbox{and}\quad
PU=\left[\begin{array}{c}0\\ \widetilde U\end{array}\right],
\end{equation}
where $R(\lambda)$ and $S(\lambda)$ are the regular and singular parts of $A-\lambda B$, respectively, and
$PU$ is partitioned conformably with $P\,(A-\lambda B)\,P^*$.

Let $R(\lambda)$ have the size $r\times r$, i.e.,
$\widetilde U\in \C^{n-r,k}$. Since $U$ has full column rank $k$, there exists
a nonsingular matrix $T\in \C^{n-r,n-r}$ such that with $\widetilde P:=\operatorname{diag}(I_r,T)\cdot P$ we get
\begin{equation}\label{eq:proof_project_PABP}
\widetilde P\,(A-\lambda B)\,\widetilde P^*=\left[\begin{array}{ccc}R(\lambda)&0&0\\ 0&S_{11}(\lambda)&S_{12}(\lambda)\\ 0&S_{21}(\lambda)&S_{22}(\lambda)
\end{array}\right]\quad\mbox{and}\quad \widetilde PU=\left[\begin{array}{c}0\\ 0\\ I_k\end{array}\right],
\end{equation}
where $S_{11}(\lambda)\in \C^{n-k-r,n-k-r}$ and $\widetilde PU$ is partitioned conformably with $\widetilde P\,(A-\lambda B)\,\widetilde P^*$.
If we partition $W^*\widetilde P^{-1}=[W_1^* \ \, W_2^* \ \, W_3^*]$,
where $W_1\in \C^{r,n-k}$, $W_2\in \C^{n-k-r,n-k}$, and $W_3\in \C^{k,n-k}$,
then if follows from $W^*U=(W^*\widetilde P^{-1})(\widetilde PU)=0$ that $W_3=0$. Therefore, since $W^*$ has full rank,
the matrix $\widetilde W^*:=[W_1^* \ \, W_2^*]$ is invertible.

From $W^*(A-\lambda B)\,W= W^*\widetilde P^{-1}\widetilde P\,(A-\lambda B)\,\widetilde P^*
\widetilde P^{-*}W$ and \eqref{eq:proof_project_PABP}
we now obtain
\[
W^*(A-\lambda B)\,W
=\widetilde W^*
\left[\begin{array}{cc}R(\lambda)&0\\ 0&S_{11}(\lambda)\end{array}\right]
\widetilde W.
\]
Since
$\widetilde W$ is invertible,
it follows that the pencil $W^*(A-\lambda B)\,W$ is equivalent to the pencil $\operatorname{diag}\big(R(\lambda), S_{11}(\lambda)\big)$.
Furthermore, since $U\in\Omega_2\cap\Omega_4$ it follows that the pencil
\[
\widetilde A-\lambda \widetilde B:= A-\lambda B+\tau \, (UD_AU^*-\lambda \, UD_BU^*),
\]
where $\tau\ne 0$, is regular and has the Kronecker canonical form
\[
\left[\begin{array}{ccc}R(\lambda)&0&0\\ 0&R_{\rm pre}(\lambda)&0\\ 0&0&R_{\rm ran}(\lambda)\end{array}\right],
\]
where $R_{\rm pre}(\lambda)$ and $R_{\rm ran}(\lambda)$ are the
regular parts containing the prescribed and random eigenvalues according to
Theorem~\ref{thm:mainBor} and Remark~\ref{rm:3groups}, respectively.
We can now show that $S_{11}(\lambda)$ and $R_{\rm ran}(\lambda)$ are equivalent.
Since the blocks $S_{11}(\lambda)$ and $R_{\rm ran}(\lambda)$ are of the same size
and all eigenvalues of $R_{\rm ran}(\lambda)$ are simple, it is enough to show that
each eigenvalue of $R_{\rm ran}(\lambda)$ is also an eigenvalue of
$S_{11}(\lambda)$.

Let $\lambda_0$ be an eigenvalue of $R_{\rm ran}(\lambda)$ (generically, this
eigenvalue will be finite, i.e., we have $\lambda_0\in\mathbb C$)
and let $z$ be a right eigenvector and $v$ be a left eigenvector of $\widetilde A-\lambda \widetilde B$
associated with $\lambda_0$. By Theorem~\ref{thm:mainBor} then either $U^*z=0$ or $U^*v=0$
(exactly one of these statements is true) and thus, since the columns of $W$ are
a basis of the orthogonal
complement of the range of $U$, then either
$z=W x$ for a nonzero $x\in \C^{n-k}$ or $v=Wy$ for a nonzero $y\in \C^{n-k}$ (again, exactly one of
these statements is true). Then either $y^*W^*(A-\lambda_0 B)\,W=0$ or $W^*(A-\lambda_0 B)\,Wx=0$ and thus $\lambda_0$ is an eigenvalue of $W^*(A-\lambda B)\,W$. This finishes the proof of $1)$.

For the proof of $2)$, first observe that if $\lambda_0$ is an eigenvalue of $R(\lambda)$ and $z$ and $v$ are right
and left eigenvectors of $\widetilde A-\lambda \widetilde B$
associated with $\lambda_0$, respectively, then
following the same argument as in the paragraph above, it follows from Theorem
\ref{thm:nfp} that both $z=Wx$ and
$v=Wy$ for nonzero vectors $x,y\in \C^{n-k}$.
The map $z\mapsto x$ is a bijection from the set of right eigenvectors
 of $\widetilde A-\lambda\widetilde B$ associated with $\lambda_0$ to the set of right eigenvectors of $W^*(A-\lambda B)\,W$ associated
 with $\lambda_0$. Similar observations hold for the map $w\mapsto y$, related
 to the set of left eigenvectors.

Thus, if $\lambda_0\in \C$ is an eigenvalue of $W^*(A-\lambda B)\,W$ such
that $\lambda_0$ is an eigenvalue of $R(\lambda)$ with left eigenvector $y$ and right eigenvector $x$,
then both $Wy$ and $Wx$ are left respectively right eigenvectors of $\widetilde A-\lambda \widetilde B$ associated with $\lambda_0$. Then
$(A-\lambda_0 B)\,Wx=0$ and this implies
$W_\perp^*(A-\lambda_0 B)\,W x=0$. In a similar way
$y^*W^*(A-\lambda_0 B)=0$ implies that $y^*W^*(A-\lambda_0 B)\,W_\perp=0$.

If on the other hand $\lambda_0$ is an eigenvalue of
$R_{\rm ran}(\lambda)$, then either $Wy$ or $Wx$ is a left or right eigenvector of
$A-\lambda B$, respectively,
but not both. First assume, that $Wy$ is a left eigenvector of $\widetilde A-\lambda \widetilde B$ associated with $\lambda_0$. Then
$W_\perp^*(A-\lambda_0 B)\,Wx\ne 0$, because otherwise, keeping in mind
that $y$ is a left eigenvector of $W^*(A-\lambda B)W$ and $[W \ \, W_\perp]$ is nonsingular, we would have
$(A-\lambda_0 B)\,Wx=0$ implying that
$Wx$ is a right eigenvector of $A-\lambda B$ which is a contradiction.
Analogously, we show that $y^*W^*(A-\lambda_0 B)\,W_\perp \ne 0$ when $Wx$ is a right eigenvector of $A-\lambda B$
associated with $\lambda_0$. The claim in the case that $\lambda_0=\infty$ is an eigenvalue of $W^*(A-\lambda B)\,W$
follows in a similar way.
\end{proof}
\begin{remark}\rm
We highlight that if all minimal indices of the singular pencil are zero, then
the regular part has size $n-k$ and hence any projected pencil as in
Theorem~\ref{main_projection} will have no random eigenvalues which makes the identification of the true eigenvalues trivial. For example this is the case
if any linear combination of $A$ and $B$ is positive semidefinite, cf. Remark~\ref{rm:definite}.
\end{remark}
\begin{remark} \label{rem:augm} \rm
Without giving details, we mention that the augmentation approach from Part II \cite{HMP_SingGep2} can also be adapted for a structure-preserving approach.
The augmented pencil in the symmetric case is then of the form  (cf.~\cite[(5.1)]{HMP_SingGep2})
\[
\mtxa{cc}{A & UD_A \\ D_A^*U^* & 0} \mtxa{c}{x \\ z} = \lambda \, \mtxa{cc}{B & UD_B \\ D_B^*U^* & 0} \mtxa{c}{x \\ z}.
\]
Here we explicitly select the matrices $D_A$ and $D_B$ to be diagonal such that the
regular pencil $D_A - \lambda D_B$ has simple complex eigenvalues without complex
conjugate pairs, to avoid getting double eigenvalues.
(This is a noticeable difference with Part II, where the diagonal entries of these matrices are typically chosen from the interval $[1,\,2]$.)
This augmentation approach may especially be useful for large sparse problems with modest values of $k$; we leave this for future work.
\end{remark}

\section{Other symmetry structures}\label{sec:symm}
Although the main results of the previous two section were formulated for Hermitian
pencils only, they can be directly applied to many other symmetry structures that
are related to the Hermitian structure. We first recall the following definition,
see, e.g., \cite{MacMMM06b}.

\begin{definition}\label{def:struc}
Let $A,B\in\mathbb C^{n,n}$. Then the pencil $A-\lambda B$ is called
\begin{itemize}
    \item \emph{$*$-even} if $A=A^*$ and $B=-B^*$,
    \item \emph{$*$-odd} if $A=-A^*$ and $B=B^*$,
    \item \emph{skew-Hermitian} if $A=-A^*$ and $B=-B^*$,
    \item \emph{$*$-palindromic} if $B=A^*$, and
    \item \emph{$*$-anti-palindromic} if $B=-A^*$.
\end{itemize}
\end{definition}

Since all these symmetry structures can be easily reduced to the Hermitian
structure, there are straightforward generalizations of the main results from the
previous two sections for these symmetry structures. We refrain from stating them
explicitly for each individual symmetry structure, but formulate the following
meta-theorem instead.

\begin{mtheorem}
Let $A-\lambda B$ be a singular $n\times n$ complex matrix pencil of normal
rank $n-k$ carrying one of
the symmetry structures as in Definition~\ref{def:struc}. Then the results
from Theorem~\ref{thm:nfp}, Theorem~\ref{thm:mainBor} and
Theorem~\ref{main_projection} hold for $A-\lambda B$, where for the first
two theorems the regular $k\times k$ pencil $D_A-\lambda D_B$ is chosen to
carry the same symmetry structure as the original pencil $A-\lambda B$.
\end{mtheorem}

\begin{proof}
If $A-\lambda B$ is even, odd, or skew-Hermitian, the result immediately
follows from the observation that multiplying one (or both) of the coefficient
matrices by the imaginary unit $i$ turns the pencil into a Hermitian pencil.

In the case of an (anti-)palindromic pencil $A-\lambda A^*$ we can apply
a Cayley transformation; see, e.g., \cite{LanR95}. These transformations map
(anti-)palindromic pencils to even or odd pencils and vice versa; see, e.g.,
\cite{MacMMM06b}. Since Cayley transformations are special M\"obius transformations, it follows from the results in \cite{MMMM15} that the
eigenvalues change in a predictable bijective way while the eigenvectors
remain invariant. Also the change in the sign characteristic under a M\"obius
transformation is well understood \cite{MehNTX16}. Hence the result
follows by applying the Cayley transformation and a multiplication of one
of the coefficient matrices by $i$ to both the original pencil and the
perturbation pencil to turn them into Hermitian pencils.
\end{proof}

The situation changes drastically if the considered pencils are real symmetric
and real symmetric rank-completing perturbations or real projections
are considered. Indeed, inspecting the proof of Lemma~\ref{lem:Lo}, it was explicitly used that the
considered perturbation matrices are allowed to be complex to
show that the values $\alpha_1,\dots,\alpha_M$ are different from each of their complex conjugates. Numerical examples show that this is no longer the case if real
symmetric pencils and real
symmetric rank-completing perturbations are considered. In fact, experiments
suggest that $\{\alpha_1,\dots,\alpha_m\}=\{\overline{\alpha}_1,\dots,\overline{\alpha}_M\}$.
These values turn out to be double random eigenvalues of the
perturbed Hermitian pencil. As a small illustrating example, consider
\[
A - \lambda B = \smtxa{rrr}{0 & 1 & -\lambda \\ 1 & 0 & 0 \\ -\lambda & 0 & 0},
\]
a pencil with right (and therefore also left) minimal index equal to 1.
The following special perturbations describe the behavior that can also be observed
for generic rank-completing perturbations:
for the real perturbation  $u = [1, 1, 1]^T$, the eigenvalues of the pair $(A + 2uu^*, \, B+uu^*)$ are the prescribed eigenvalue $2$, and the double (``random'') eigenvalue $1$.
The geometric multiplicity of this double eigenvalue is 1, and the corresponding eigenvector (since the eigenvalue is real, the left and right eigenvectors are equal) satisfies the orthogonality condition $u^*x = 0$.
With a Hermitian perturbation using $u = [1, 1, 1]^T + i \, [1,2,3]^T$, we get the prescribed eigenvalue 2, and two complex conjugate (``random'') eigenvalues $1.4 \pm 0.2\,i$; these eigenvalues are therefore simple.
For one of the two eigenvalues we have $u^*x = 0$ and $u^*y \ne 0$ (where $y$ is the corresponding left eigenvector); for the other we have $u^*x \ne 0$ and $u^*y = 0$.

As the next example, we consider a pencil with right minimal index 2:
\[
A - \lambda B = \smtxa{rrrrr}{0 & 0 & 1 & -\lambda & 0 \\ 0 & 0 & 0 & 1 & -\lambda \\ 1 & 0 & 0 & 0 & 0 \\ -\lambda & 1 & 0 & 0 & 0 \\ 0 & -\lambda & 0 & 0 & 0}.
\]
A real symmetric rank-1 perturbation using $u = [1, 1, 1, 1, 1]^T$ yields the prescribed eigenvalue 2, as well as a double eigenvalue $\frac12 + \frac12 \sqrt3\,i$, and a double eigenvalue $\frac12 - \frac12 \sqrt3\,i$.
The geometric multiplicity of the two double eigenvalues is 1, and the corresponding left and right eigenvector satisfy the orthogonality condition $u^*x = 0$ and $u^*y=0$, note that for a real symmetric pencil a left eigenvector for a complex eigenvalue $\lambda$ is a right eigenvector
for $\overline \lambda$.
A Hermitian rank-1 perturbation with $u = [1, 1, 1, 1, 1]^T + i \, [1, 2, 3, 4, 5]^T$ gives the eigenvalue $2$ and four simple eigenvalues in two complex conjugate pairs ($0.59 \pm 1.14\,i$ and $0.71 \pm 1.04\,i$).
Of every of these four eigenvalues, exactly one of the quantities $u^*x$ and $u^*y$ is zero.

For various random real rank-1 perturbations, we observe either two double
real random eigenvalues, or a pair of complex conjugate double random eigenvalues.
For Hermitian rank-1 perturbations, the random eigenvalues are all simple, non-real, and come in complex conjugate pairs.
This gives rise to the following conjecture that has already been proven for the case of 1 minimal index and a rank-1 perturbation in \cite{mehl2017parameter}.

\begin{conjecture} \label{conj:double}
Let $A-\lambda B$ be a real symmetric singular $n\times n$ pencil with normal rank
$n-k$ and let $D_A-\lambda D_B$ be a real symmetric regular $k\times k$ pencil. Then
there exists a generic set $\Omega\subseteq\mathbb R^{n,k}$ such that for all
$U\in\Omega$ and all $\tau\in\mathbb R\setminus\{0\}$ the spectrum of the pencil
$A-\lambda B+\tau \, U\,(D_A-\lambda D_B)\,U^\top$ consists of the eigenvalues of
$A-\lambda B$ (true eigenvalues), the eigenvalues of $D_A-\lambda D_B$ (prescribed
eigenvalues) and random eigenvalues that all have algebraic multiplicity precisely
two.
\end{conjecture}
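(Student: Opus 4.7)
The strategy is to exploit that for a real symmetric pencil with real $U$, the right and left eigenvectors of $\widetilde A-\lambda\widetilde B$ at any eigenvalue coincide. Consequently both orthogonality conditions $U^\top x=0$ and $U^\top y=0$ hold automatically at each random eigenvalue, and a vanishing normalization $y^\top\widetilde B x=0$ then forces the algebraic multiplicity to be at least two; a counting argument will pin it down to exactly two.

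First I would adapt Lemma~\ref{lem:Lo}, Proposition~\ref{prop:genpert}, Theorem~\ref{thm:nfp}, and Theorem~\ref{thm:mainBor} from complex to real genericity. For real $U$, the matrix $Q(\alpha)$ with entries $u_i^\top q_j(\alpha)$ has real-polynomial entries, so the $M$ roots $\alpha_1,\dots,\alpha_M$ of $\det Q(\alpha)$ close up under complex conjugation. The parts of those earlier proofs asserting that $\alpha_1,\dots,\alpha_M$ are pairwise distinct, disjoint from the eigenvalues of $R(\lambda)$ and from the prescribed $\gamma_i$, that $\widetilde A-\lambda\widetilde B$ is regular, and that the true and prescribed eigenvalues behave as claimed all carry over: each non-vanishing polynomial condition used there is a real polynomial in the entries of $U$, and the particular witnesses used in the original proofs (e.g.\ $u_i=e_{m_1+\cdots+m_i+i}-\varepsilon_i\,e_{m_1+\cdots+m_{i-1}+i}$ with real $\varepsilon_i>0$) are already real. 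Only the conclusion $\{\alpha_i\}\cap\{\overline{\alpha}_i\}=\emptyset$ of Lemma~\ref{lem:Lo}(1) genuinely fails and is replaced by its negation.

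The core new step is the following. For each $\alpha_i$ pick a nonzero $z_i$ with $(A-\alpha_i B)z_i=0$ and $U^\top z_i=0$. Using $A^\top=A$, $B^\top=B$, and the reality of $U$, transposing $(A-\alpha_i B)z_i=0$ and appending $\tau(U^\top z_i)^\top(D_A-\alpha_i D_B)U^\top=0$ gives $z_i^\top(\widetilde A-\alpha_i\widetilde B)=0$, so $z_i$ is simultaneously a right and a left eigenvector of $\widetilde A-\lambda\widetilde B$ at $\alpha_i$. The decisive calculation is
\[
z_i^\top\,\widetilde B\,z_i \;=\; z_i^\top B z_i \;+\; \tau\,(U^\top z_i)^\top D_B\,(U^\top z_i) \;=\; z_i^\top B z_i.
\]
Since $\alpha_i$ is not an eigenvalue of $R(\lambda)$, the vector $z_i$ lies entirely in the singular part in the block-diagonal coordinates provided by Proposition~\ref{prop:genpert}; block-by-block inspection of $\mtxa{cc}{0 & L_{m_j}(\lambda) \\ L_{m_j}(\lambda)^\top & 0}$ shows that its right kernel vector at $\alpha_i$ has the form $[0_{m_j};v]^\top$ while the coefficient of $\lambda$ in this block sends the ``right half'' into the disjoint ``left half'', so the block quadratic form vanishes on the kernel, and summing over blocks yields $z_i^\top B z_i=0$. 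The standard simplicity criterion for regular pencils---an eigenvalue is simple only if some right/left eigenvector pair $(x,y)$ satisfies $y^\top\widetilde B x\neq 0$---then forces the algebraic multiplicity of $\alpha_i$ to be at least two (if the geometric multiplicity is one, every such pair is a scalar multiple of $(z_i,z_i)$ and pairs to zero; if it is larger, the algebraic multiplicity is already at least $2$).

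A count closes the argument. The regular pencil $\widetilde A-\lambda\widetilde B$ has exactly $n$ eigenvalues counted with algebraic multiplicity: $r=n-2M-k$ true, $k$ prescribed, and therefore $2M$ random. Since the $M$ pairwise distinct $\alpha_i$ are all random eigenvalues and each has algebraic multiplicity at least $2$, their combined contribution already equals $2M$, which forces every random eigenvalue to have multiplicity exactly~$2$ and the $\alpha_i$ to exhaust them. I expect the main obstacle to be the careful bookkeeping required to port Proposition~\ref{prop:genpert} and Theorems~\ref{thm:nfp}--\ref{thm:mainBor} from complex to real genericity; each witness used in those proofs must be re-examined to confirm that it can be taken in $\mathbb R^{n,k}$, which I believe is always the case but must be verified step by step.
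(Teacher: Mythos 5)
The statement you are addressing is explicitly labelled a \emph{Conjecture} in the paper, and the authors state that ``further investigation of this conjecture is out of scope for this paper, and is left for future research'' (only the case $k=1$ with a single minimal index is attributed to \cite{mehl2017parameter}). So there is no proof in the paper to compare against; what you have written is a proposed new proof.

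Your argument appears to be essentially correct, and the key new idea is clean and worth highlighting. From Lemma~\ref{lem:Lo}(1) (its real adaptation) you get $M$ pairwise distinct values $\alpha_i$ with vectors $z_i$ satisfying $(A-\alpha_i B)z_i=0$ and $U^\top z_i=0$; symmetry of $A$, $B$, $D_A$, $D_B$ and the reality of $U$ make $z_i$ simultaneously a right eigenvector and a $\top$-left eigenvector of $\widetilde A-\lambda\widetilde B$ at $\alpha_i$. The decisive observation is $z_i^\top\widetilde B z_i = z_i^\top B z_i = 0$: writing $z_i$ in the block-diagonal coordinates of Proposition~\ref{prop:genpert}, the vector has no component in the regular block (as $R(\alpha_i)$ is invertible), and on each singular block $\smtxa{cc}{0 & L_{m_j}(\lambda)\\ L_{m_j}(\lambda)^\top & 0}$ the kernel vector at $\alpha_i$ has the ``left half'' equal to zero, which annihilates the quadratic form for $\smtxa{cc}{0 & H_{m_j}\\ H_{m_j}^\top & 0}$. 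Combined with the derivative-of-determinant criterion ($\lambda_0$ is algebraically simple iff $y^\top\widetilde B x\neq 0$), each $\alpha_i$ has algebraic multiplicity at least two, and the count $r+k+2M=n$ forces multiplicity exactly two and the $\alpha_i$ to exhaust the random eigenvalues. I have checked the $z_i^\top B z_i=0$ computation against both small examples in Section~5 of the paper and it holds.

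Two points deserve a more careful write-up than your sketch provides, though I believe both go through. First, the witness used in the proof of Theorem~\ref{thm:nfp}(1) — $\widetilde U_2 = [\,\text{selection}\,]\cdot Q$ with $Q$ the complex \emph{unitary} factor of a Schur decomposition — does not survive the passage from $U_2(\cdot)U_2^*$ to $U_2(\cdot)U_2^\top$, since $Q^\top \ne Q^*$ in general. A suitable replacement is to use any complex $S$ with $S(D_A-\lambda_0 D_B)S^\top = I_k$ (available because $D_A-\lambda_0 D_B$ is nonsingular complex symmetric), and then observe that a nonvanishing polynomial in $U_2\in\mathbb C^{(2M+k)\times k}$ is also nonvanishing on $\mathbb R^{(2M+k)\times k}$ by Zariski density, so real genericity follows. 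Second, for the count you need the prescribed eigenvalues to contribute at least $k$ to the spectrum and the random $\alpha_i$ to be disjoint from both the true and prescribed eigenvalues; the first follows from the real adaptation of Lemma~\ref{lem:Lo}(2) and Theorem~\ref{thm:mainBor}(2) (geometric multiplicity already suffices), and the second is part of the adapted Lemma~\ref{lem:Lo}(1). Implicit throughout (as in Theorems~\ref{thm:nfp} and~\ref{thm:mainBor}) is that the eigenvalues of $D_A-\lambda D_B$ are semisimple and distinct from those of $A-\lambda B$; the Conjecture as stated omits these hypotheses and should be read with them understood. With those details filled in, your argument gives a proof of the conjecture, and it is a genuinely new result relative to the paper.
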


Further investigation of this conjecture is out of scope for this paper,
and is left for future research.

\section{Numerical experiments} \label{sec:appl}
We will now show two experiments and a new symmetric determinantal representation for bivariate polynomials.
\begin{experiment} \rm
For the first experiment, we consider the singular real symmetric $24\times 24$
pencil
\begin{equation}\label{22.5.24}
A-\lambda E = S\cdot \smtxa{ccccc}{R(\lambda)&0&0&0&0\\
0&0&L_1(\lambda)&0&0\\ 0&L_1(\lambda)^\top &0&0&0\\ 0&0&0&0&L_2(\lambda)\\
0&0&0&L_2(\lambda)^\top&0}\cdot S^\top,
\end{equation}
where $S\in\mathbb R^{24,24}$ is random and where the regular part
$R(\lambda)=R_A-\lambda R_E$ is given by
\begin{align*}
R_A & =\text{diag}\Big(1,2,2,3,
\smtxa{rr}{1&0\\0&\!-1\!},
\smtxa{rr}{1&1\\1&\!-1\!},
\smtxa{rr}{1&2\\2&\!-1\!},
\smtxa{rr}{2&0\\0&\!-2\!},
\smtxa{rr}{2&1\\1&\!-2\!},
\smtxa{rr}{2&2\\2&\!-2\!}\Big), \\[1mm]
R_E & =\text{diag}\Big(1,1,-1,-1,
\smtxa{rr}{0&1\\1&0},
\smtxa{rr}{0&1\\1&0},
\smtxa{rr}{0&1\\1&0},
\smtxa{rr}{0&1\\1&0},
\smtxa{rr}{0&1\\1&0},
\smtxa{rr}{0&1\\1&0}\Big).
\end{align*}
Thus, the pencil is singular with normal rank $22$ and has the simple complex eigenvalues
$\pm i$, $\pm 2i$, $1\pm i$, $1\pm 2i$, $2\pm i$, $2\pm 2i$, as well as the real
eigenvalues $1,2,3$. The simple real eigenvalues $1$ and $3$ have the signs $+1$ and $-1$,
respectively, and the double semisimple eigenvalue $2$ has the sign characteristic $(1,-1)$. Applying a Hermitian rank-completing perturbation with a complex matrix
$U\in\mathbb C^{24,2}$ having orthonormal columns and a $2 \times 2$ real diagonal
pencil $D_A-\lambda D_E$ yields the results in Table~\ref{tab:2}, where we have used
the standard implementation of the $QZ$ algorithm by calling the
Matlab routine {\sf eig}.
Note that although it may not be suitable for all cases, for our reported experiments we may take $D_A$ and $D_B$ diagonal (and hence real).

\begin{table}[htb!] \label{tab:2}
\centering
\caption{Results of a Hermitian rank-completing perturbation applied to the pencil
$A-\lambda E$ from\eqref{22.5.24} }
{\footnotesize \begin{tabular}{r|rlll} \hline \rule{0pt}{2.3ex}%
$j$ & $\lambda_j~~~~~~~~~$ & $\ \ \|U^*x_j\|$ & $\ \ \|U^*y_j\|$ & Type \\[0.5mm]
\hline \rule{0pt}{2.6ex}%
  1 &   3.0000+0.0000i & {$2.58\cdot 10^{-14}$} &   {$3.68\cdot 10^{-14}$} & True\\
  2 &   2.0000+0.0000i &{$4.26\cdot 10^{-14}$}  &{$1.15\cdot 10^{-13}$} & True\\
  3 &   1.0000+1.0000i &{$9.48\cdot 10^{-15}$}  &  {$9.33\cdot 10^{-15}$} & True\\
  4 &   1.0000$-$1.0000i &{$1.80\cdot 10^{-14}$}  & {$8.68\cdot 10^{-15}$} & True\\
  5 &   1.0000+2.0000i &{$1.11\cdot 10^{-14}$}  & {$5.67\cdot 10^{-15}$} & True\\
  6 &   1.0000$-$2.0000i &{$4.20\cdot 10^{-15}$}  & {$8.64\cdot 10^{-15}$} & True\\
  7 &   2.0000+1.0000i &{$1.71\cdot 10^{-14}$}  & {$1.61\cdot 10^{-14}$} & True\\
  8 &   2.0000$-$1.0000i &{$2.09\cdot 10^{-14}$}  &  {$2.27\cdot 10^{-14}$} & True\\
  9 &   2.0000+0.0000i &{$1.18\cdot 10^{-13}$}  &  {$3.02\cdot 10^{-14}$} & True\\
 10 &   2.0000$-$2.0000i &{$1.81\cdot 10^{-14}$}  & {$9.99\cdot 10^{-15}$} & True\\
 11 &   2.0000+2.0000i &{$5.96\cdot 10^{-15}$} & {$1.65\cdot 10^{-14}$} & True\\
 12 &   1.0000$-$0.0000i &{$1.45\cdot 10^{-14}$}  &{$1.14\cdot 10^{-14}$} & True\\
 13 &   0.0000$-$2.0000i &{$1.37\cdot 10^{-14}$}  &{$4.39\cdot 10^{-15}$} & True\\
 14 &   0.0000+2.0000i &{$6.26\cdot 10^{-15}$}  &{$9.86\cdot 10^{-15}$} & True\\
 15 &   0.0000+1.0000i &{$3.40\cdot 10^{-15}$}  &{$1.30\cdot 10^{-14}$} & True\\
 16 &   0.0000$-$1.0000i &{$1.37\cdot 10^{-14}$}  & {$2.24\cdot 10^{-15}$} & True\\
 17 &  $-$0.5005$-$0.4835i &{$3.77\cdot 10^{-15}$}  &   {$1.03\cdot 10^{-1}$} & Random\\
 18 &  $-$0.3654$-$0.3209i &{$2.73\cdot 10^{-15}$}  &   {$5.47\cdot 10^{-2}$} & Random\\
 19 &  $-$0.2505+0.8105i &{$ 4.65\cdot 10^{-15}$}  &   {$1.62\cdot 10^{-2}$} &  Random\\
 20 &  $-$0.5005+0.4835i &{$1.03\cdot 10^{-1}$}  & {$3.55\cdot 10^{-15}$} & Random\\
 21 &  $-$0.3654+3.2093i &{$5.47\cdot 10^{-2}$}  &{$2.18\cdot 10^{-15}$} &  Random\\
 22 &  $-$0.2505$-$0.8105i &{$1.62\cdot 10^{-2}$}  &{$3.96\cdot 10^{-15}$} &  Random\\
23 &    1.8100+0.0000i & $1.59\cdot 10^{-1}$ & $1.59\cdot 10^{-1}$ & Prescribed\\
 24 &   2.2343+0.0000i & $1.42\cdot 10^{-1}$ & $1.42\cdot 10^{-1}$ & Prescribed\\
\hline
\end{tabular}}
\end{table}

As desired, the algorithm classifies all eigenvalues of the pencil $A-\lambda E$ as true eigenvalues and in addition detects two prescribed eigenvalues as well as six random eigenvalues
that turn out to be all complex and simple, as expected. In particular, this example shows
that real eigenvalues among the true and prescribed eigenvalues do not cause any problems in the numerical method.
For the real eigenvalues $\lambda_j$ with $j=1,2,9,12$ we observe that the values
$\|U^*x_j\|$ and $\|U^*y_j\|$ are slightly different. This is due to the fact that the
routine {\sf eig} ignores the Hermitian structure of the problem and computes the right and
left eigenvectors $x_j$ and $y_j$ independently. Therefore, we only have
$y_j\approx e^{i\phi}x_j$ for some $\phi\in [0,2\pi)$ which leads to different
rounding errors in the values $U^*x_j$ and $U^*y_j$.

Although the $QZ$ algorithm did not make use of the Hermitian structure of the
problem, we can calculate the sign characteristic of the singular Hermitian pencil
$A-\lambda E$ from the perturbed regular Hermitian pencil as discussed in
Remark~\ref{rem:sign}. Considering again the real eigenvalues $\lambda_1=3$,
$\lambda_2=\lambda_9=2$, $\lambda_{12}=1$ and the corresponding right eigenvectors,
we obtain $x_1^*Ex_1=-1.1865$ and $x_{12}^*Ex_{12}=0.1456$ while the $2\times 2$ matrix
\[
[\,x_2 \ \ x_9\,]^*\, E\
[\,x_2 \ \ x_9\,]
\]
has the eigenvalues $-1.5649$ and $0.2395$. Thus, we recover the sign $-1$ for the
eigenvalue $\lambda_1=3$, the sign $+1$ for the eigenvalue $\lambda_{12}=1$ and the
sign characteristic $(1,-1)$ for the double eigenvalue $\lambda_2=\lambda_9=2$.

If, in contrast, a real symmetric rank-completing perturbation is used, then the
algorithm fails in recognizing the random eigenvalues. Instead, it classifies
the eigenvalues wrongly into $18$ true ones and $6$ prescribed ones.
\end{experiment}

Next, we show that the developed techniques can be exploited for the solution of bivariate polynomial systems.
In \cite{PH_BiRoots_SISC,BDDHP_Uniform}, bivariate polynomial systems are solved via determinantal representations.
Nonsymmetric representations have been used in Part I \cite[Ex.~7.1]{HMP19} to solve such a system.
In this section we introduce new {\em symmetric} determinantal representations, for which we will use the perturbation method introduced in this paper.
Given a bivariate polynomial $p(\lambda,\mu)$, we are interested in matrices $A, B, C$ such that $p(\lambda,\mu) = \det(A+\lambda B+\mu C)$.
For polynomials of degree $d$, the nonsymmetric representations of \cite{BDDHP_Uniform} are of size $2d-1$, the optimal known ``uniform'' size (see \cite{BDDHP_Uniform} for more information).
The symmetric representations that we will introduce now have the disadvantage of being {\em asymptotically} of size ${\mathcal O}(d^2)$ (similar to those of \cite{PH_BiRoots_SISC}). However, for small $d$, say $d \le 5$, they are of the same or only slightly larger size compared to nonsymmetric representations.

More specifically, for the cubic bivariate polynomial $a_{00}+a_{10}\lambda+a_{01}\mu+a_{20}\lambda^2+a_{11}\lambda\mu+a_{02}\mu^2 + a_{30}\lambda^3+a_{21}\lambda^2\mu+a_{12}\lambda\mu^2+a_{03}\mu^3$, we get the following $5 \times 5$ determinantal representation:
\[
\mtxa{ccccc}{
a_{00}+a_{10}\,\lambda+a_{01}\,\mu & \frac12 a_{11}\,\mu & -\lambda & & -\mu \\
\frac12 a_{11}\,\mu & a_{20}+a_{30}\,\lambda+a_{21}\,\mu & \ph{-}1 & & \\
-\lambda & \ph{-}1 & & &\\
& & & a_{02}+a_{12}\,\lambda+a_{03}\,\mu & \ph{-}1 \\
-\mu & & & \ph{-}1 &}.
\]
Therefore, for $d=3$, this is the optimal size $2d-1$ as in \cite{BDDHP_Uniform}.
We note that this representation is far from unique;
for instance, we can replace the $a_{20}$ element on position (2,2) by $\frac12 a_{20}\,\lambda$ elements on positions (1,2) and (2,1).
Additionally, we can carry out a symmetric permutation of rows and columns.
For degree $d=5$, for instance, one may check that we get an $11 \times 11$ symmetric representation.

\begin{experiment} \rm
Consider the cubic bivariate polynomial system
\begin{align*}
1+2\lambda+3\mu+4\lambda^2+5\lambda\mu+6\mu^2+7\lambda^3+8\lambda^2\mu+9\lambda\mu^2+10\mu^3 & = 0, \\
10+9\lambda+8\mu+7\lambda^2+6\lambda\mu+5\mu^2+4\lambda^3+3\lambda^2\mu+2\lambda\mu^2+\phantom{0}\mu^3 & = 0.
\end{align*}
We can construct {\em symmetric} determinantal representations of size $5 \times 5$ as above, which are of the same size as the nonsymmetric ones in \cite{BDDHP_Uniform}.
These are of the form $A_i + \lambda B_i + \mu C_i$ for $i = 1, 2$.
The $\lambda$-components of the roots $(\lambda, \mu)$ are the eigenvalues of the pencil $(\Delta_1, \, \Delta_0) = (C_1 \otimes A_2 - A_1 \otimes C_2, \, B_1 \otimes C_2 - C_1 \otimes B_2)$, and are presented in Table~\ref{tab:1}.
We have $\rank(\Delta_1) = 23$, $\rank(\Delta_0) = 18$, and $\text{nrank}(\Delta_1, \Delta_0) = 23$.
Since no linear combination of $\Delta_1$ and $\Delta_0$ is positive semidefinite, the eigenvalues are not all real.

\begin{table}[htb!] \label{tab:1}
\centering
\caption{Results of a symmetric rank-completing perturbation applied to the pencil $(\Delta_1, \, \Delta_0)$.}
{\footnotesize \begin{tabular}{c|cllll} \hline \rule{0pt}{2.3ex}%
$j$ & $\lambda_j$ & $\ \ |y_j^*x_j|$ & $\ \ \|U^*x_j\|$ & $\ \ \|U^*y_j\|$ & Type \\[0.5mm]
\hline \rule{0pt}{2.5ex}%
1 & $-2.4 -4.8\cdot 10^{-16}i$ & $1.1\cdot 10^{-2}$ & $1.8\cdot 10^{-16}$ & $3.3\cdot 10^{-16}$ & True finite \\
2--3 & $-0.56 \pm 2i$ & $4.2\cdot 10^{-5}$ & $3.5\cdot 10^{-16}$ & $4.4\cdot 10^{-16}$ & True finite \\
4--5 & $-1.1 \pm 0.3i$ & $3.2\cdot 10^{-3}$ & $3.5\cdot 10^{-16}$ & $3.9\cdot 10^{-16}$ & True finite \\
6--7 & $0.081 \pm 1.1i$ & $1.5\cdot 10^{-4}$ & $2.7\cdot 10^{-16}$ & $7.8\cdot 10^{-16}$ & True finite \\
8--9 & $0.072 \pm 1.2i$ & $4.1\cdot 10^{-3}$ & $6.0\cdot 10^{-16}$ & $5.2\cdot 10^{-16}$ & True finite \\
10 & $-3.7\cdot 10^6 +8.3\cdot 10^6i$ & $4.0\cdot 10^{-18}$ & $4.5\cdot 10^{-16}$ & $1.1\cdot 10^{-16}$ & True infinite \\
11 & $\phantom-3.7\cdot 10^6 -8.3\cdot 10^6i$ & $3.8\cdot 10^{-18}$ & $4.1\cdot 10^{-16}$ & $1.2\cdot 10^{-16}$ & True infinite \\
12 & $-3.9\cdot 10^3 +5.8\cdot 10^2i$ & $4.6\cdot 10^{-18}$ & $4.4\cdot 10^{-16}$ & $5.3\cdot 10^{-17}$ & True infinite \\
13 & $\phantom-3.9\cdot 10^3 -5.8\cdot 10^2i$ & $5.9\cdot 10^{-18}$ & $3.7\cdot 10^{-16}$ & $8.7\cdot 10^{-17}$ & True infinite \\
14 & $-5.8\cdot 10^2 -3.9\cdot 10^3i$ & $4.0\cdot 10^{-18}$ & $4.1\cdot 10^{-16}$ & $7.7\cdot 10^{-17}$ & True infinite \\
15 & $\phantom-5.8\cdot 10^2 +3.9\cdot 10^3i$ & $3.7\cdot 10^{-18}$ & $4.1\cdot 10^{-16}$ & $4.3\cdot 10^{-17}$ & True infinite \\
16 & $\infty$ & $1.9\cdot 10^{-18}$ & $4.4\cdot 10^{-16}$ & $2.5\cdot 10^{-16}$ & True infinite \\
17 & $\infty$ & $1.1\cdot 10^{-18}$ & $4.9\cdot 10^{-16}$ & $1.7\cdot 10^{-16}$ & True infinite \\
18 & $\infty$ & $1.6\cdot 10^{-18}$ & $5.1\cdot 10^{-16}$ & $1.8\cdot 10^{-16}$ & True infinite \\
19 & $\infty$ & $1.5\cdot 10^{-18}$ & $4.2\cdot 10^{-16}$ & $2.0\cdot 10^{-16}$ & True infinite \\
20 & $\infty$ & $3.6\cdot 10^{-19}$ & $3.6\cdot 10^{-16}$ & $1.9\cdot 10^{-16}$ & True infinite \\
21 & $\infty$ & $9.6\cdot 10^{-19}$ & $4.0\cdot 10^{-16}$ & $1.8\cdot 10^{-16}$ & True infinite \\
22 & $1.3+4.5\cdot 10^{-16}i$ & $1.5\cdot 10^{-2}$ & $3.1\cdot 10^{-1}$ & $3.1\cdot 10^{-1}$ & Prescribed \\
23 & $1.2 -2.2\cdot 10^{-15}i$ & $1.5\cdot 10^{-2}$ & $3.4\cdot 10^{-1}$ & $3.4\cdot 10^{-1}$ & Prescribed \\
24 & $-0.47-0.03i$ & $4.0\cdot 10^{-4}$ & $2.0\cdot 10^{-16}$ & $3.1\cdot 10^{-3}$ & Random \\
25 & $-0.47 +0.03i$ & $4.6\cdot 10^{-4}$ & $3.0\cdot 10^{-3}$ & $1.3\cdot 10^{-15}$ & Random \\
\hline
\end{tabular}}
\end{table}

Similarly, The $\mu$-components are the eigenvalues of the pencil $(\Delta_2, \, \Delta_0) = (A_1 \otimes B_2 - A_2 \otimes B_1, \, B_1 \otimes C_2 - C_1 \otimes B_2)$.
The 9 roots $(\lambda, \mu)$ are
\begin{align*}
& \\[-7mm]
& (-2.4183, \ 1.8542), \\
& (-0.5609 \pm 2.0355i, \ \ph-1.6092 \pm 0.3896i), \\
& (-1.1331 \pm 0.3012i, \ -0.3845 \mp 0.9454i), \\
& (\ph-0.0807 \pm 1.1123i, \ -1.0874 \pm 0.1905i), \\
& (\ph-0.0724 \pm 1.2249i, \ -0.3144 \mp 1.1038i).
\end{align*}
\end{experiment}%
In conclusion, for low-degree polynomials, this approach gives symmetric determinantal representations of almost the same size, and, as a result, symmetric $\Delta$-matrices for which we now have perturbations preserving a Hermitian structure.

\section{Conclusions} \label{sec:concl}
Extending results from Parts I and II \cite{HMP19,HMP_SingGep2} for unstructured singular pencils, we have proposed three structure-preserving methods to compute eigenvalues of singular real symmetric or complex Hermitian pencils.
This means that the perturbed pencils $\widetilde A-\lambda\widetilde B$ remain Hermitian.
If the original $B$ is positive semidefinite, then $\widetilde B$ can be chosen positive definite (see Remark~\ref{rm:definite}).
This can be exploited by eigensolvers, so that, for instance, the computed eigenvalues are real without any side-effects of finite precision arithmetic.

Just as in the general nonsymmetric situation \cite{HMP19, HMP_SingGep2}, there are three options: a rank-completing perturbation of size $k$, projection onto the normal rank of size $r=n-k$, or augmentation by dimension $k$. In this paper we have concentrated on the perturbation approach and its analysis in Section~\ref{sec:rankcomplete}, and on the projection approach in Section~\ref{sec:proj}.
We have shown that a Hermitian perturbation then leads to generically simple random eigenvalues.
Augmentation with dimension $k$, which extends \cite{HMP_SingGep2} for the nonsymmetric case, may be useful for large sparse problems, since this technique may (partly) preserve sparsity.
Generically this will lead to a regular symmetric (or Hermitian) pencil of size $n+k$; see Remark~\ref{rem:augm}.
For situations in which we might possibly overestimate or underestimate the normal rank, we refer to the discussion in Part II \cite[Sec.~8]{HMP_SingGep2}.
For pseudocodes of the methods, we refer to Parts I and II, where the main difference is that we now use a complex $U=V$.

We highlight again that the structure-preserving methods presented in this paper may also be
useful in the case that they are post-processed by standard eigensolvers that
ignore the special symmetry structure of the problem. Indeed, it has been discussed in
Remark~\ref{rem:sign} that the sign characteristic of the pencil can be obtained from
the perturbed Hermitian pencils once its eigenvectors have been computed.

The case for real symmetric pencils with real symmetric perturbations (see Conjecture~\ref{conj:double}) turns out
to be challenging and is left for future research.


\end{document}